\newtheorem{theorem}{Theorem}
\newtheorem{lemma}[theorem]{Lemma}
\newtheorem{proposition}[theorem]{Proposition}
\newtheorem{remark}[theorem]{Remark} 
\newtheorem{corollary}[theorem]{Corollary}
\newcounter{hypo}
\def\C{{\mathbb C}}
\def\N{{\mathbb N}} 
\def\R{{\mathbb R}} 
\def\Z{{\mathbb Z}}
\def\CH{\mathcal {H}}
\def\CL{\mathcal {L}}
\def\CO{\mathcal {O}}
 \def\im{\mathop{\rm Im}\nolimits}
\def\dive{\mathop{\rm div}\nolimits}
\def\<{\langle}
\def\>{\rangle}
\def\ds{\displaystyle}
\author[J.-F. Bony]{Jean-Fran\c{c}ois Bony}
\address{Jean-Fran\c{c}ois Bony, Institut de Math\'ematiques de Bordeaux, UMR 5251 du CNRS, Universit\'e de Bordeaux I, 351 cours de la Lib\'eration, 33405 Talence cedex, France}
\email{bony@math.u-bordeaux1.fr}
\author[D. H\"{a}fner]{Dietrich H\"{a}fner}
\address{Dietrich H\"{a}fner, Universit\'e de Grenoble 1, Institut Fourier, UMR 5582 du CNRS, BP 74, 38402 St Martin d'H\`eres, France}
\email{Dietrich.Hafner@ujf-grenoble.fr}
\title[Local energy decay in odd dimensions]{Improved local energy decay for the wave equation on asymptotically Euclidean odd dimensional manifolds in the short range case}
\keywords{Local energy decay, resolvent smoothness, wave equation, odd dimensions, low frequencies, asymptotically Euclidean manifolds}
\subjclass[2000]{35L05, 35P25, 47A10, 58J45, 81U30}
\begin{document}

\begin{abstract}
We show improved local energy decay for the wave equation on asymptotically Euclidean manifolds in odd dimensions in the short range case. The precise decay rate depends on the decay of the metric towards the Euclidean metric. We also give estimates of powers of the resolvent of the wave propagator between weighted spaces.
\end{abstract}

\maketitle

\section{Introduction}

The aim of this paper is to investigate the decay of the local energy for the wave equation associated to short range metric perturbations of the Euclidean Laplacian on $\R^{d}$, $d \geq 3$ odd. More precisely, for any $\rho > 0$, we show that the local energy decays like $\< t \>^{- \rho}$ if the metric converges like $\< x \>^{- \rho - 2 - \varepsilon}$ toward the Euclidean metric. This result rests on the $C^{\rho + 1}$ smoothness of the weighted resolvent of the wave generator.

The case of the wave equation in dimension $d \geq 3$ odd is very specific. Indeed, in flat space, the strong Huygens principle guaranties that the local energy decays as fast as we want. For compactly supported perturbations, this no longer holds in general but one can use the theory of resonances (see \cite{Sj07_01} for a general presentation of this field) to prove dispersive estimates. In non-trapping situations, this theory gives a resonance expansion of the cut-off propagator which implies an exponential decay of the local energy with an optimal decay rate as in \cite{LaPh89_01} for example. Such properties are related to the meromorphic extension to the whole complex plane (and, in particular, in a neighborhood of $0$) of the cut-off resolvent of the wave generator (see \cite{SjZw91_01}, \cite{Va89_01}). The resonance theory can also be used in trapping situations, but there is necessarily a ``loss of derivatives'' in the local energy estimate, see \cite{Ra69_01}. Among the large literature on this subject, we only refer to \cite{Bu98_01}, \cite{TaZw00_01}.

One can also obtain exponential decay of the local energy using the theory of resonances for exponentially decaying perturbations. In this case, the weighted resolvent has a meromorphic extension only in a half-plane containing the real axis and the exponential decay rate of the local energy is controlled by the exponential decay of the perturbation at infinity. Such ideas were developed in \cite{DoMcTh66_01}, \cite{Ki11_01}, \cite{SaZw95_01}. It is therefore natural to ask what are the decay rate and the regularity properties of the resolvent for polynomially decaying perturbations. In such situations, it is unlikely that the resolvent is analytic near the real axis. However, we might hope that the weighted resolvent has some $C^{k}$ regularity properties up to the real line, depending on the decay rate of the perturbation. In the same way, the exponential decay should be replaced by a polynomial one. In this paper, we show that this is indeed the case.

Note that the definition of the resonances by complex dilation or distortion (see \cite{AgCo71_01}, \cite{Hu86_01}) does not seem to be appropriate to show local energy decay at low frequencies. Indeed, such methods do not give good estimates of the resolvent near the thresholds. Concerning the resonances, we also mention that the dynamically definition of \cite{GeSi92_01} which describes the long time evolution of well-prepared initial data.

To prove the local energy decay, one can also apply other techniques like the vector field methods (in the huge literature of this field, see e.g. \cite{Kl01_01} and the books \cite{Al10_01}, \cite{Ho97_01}), the Mourre theory (see \cite{BoHa10_03}, \cite{Bo10_01}), \ldots However, in general, these methods do not distinguish between the parity of the dimension and give the polynomial decay of the local energy that one expects in even dimensions mutatis mutandis. Eventually, the theory of perturbations can be used to get resolvent estimates at low energy and then decay of the local energy for ``small perturbations'' (short range interactions, lower order terms, \ldots). This approach, close to the one developed in this paper, has been followed in numerous papers concerning the local energy decay for the Schr\"{o}dinger equation perturbed by a potential (see \cite{JeKa79_01}, \cite{Ra78_01} for example).

In this paper, we consider the following operator on $\R^d$, with $d \geq 3$ odd,
\begin{equation} \label{a5}
P= - b \dive ( A \nabla b ) = - \sum_{i,j=1}^{d} b(x) \frac{\partial \ }{\partial x_{i}} A_{i,j} (x) \frac{\partial \ }{\partial x_{j}} b (x) ,
\end{equation}
where $b(x)\in C^{\infty}(\R^d)$ and $A (x)\in C^{\infty}(\R^d;\R^{d\times d})$ is a real symmetric $d\times d$ matrix. The $C^{\infty}$ hypothesis is made mostly for convenience, much weaker regularity could actually be considered. We make an ellipticity assumption:
\begin{equation} \tag{H1} \label{a3}
\exists \delta > 0 , \ \forall x \in \R^d \qquad A (x)\geq \delta I_d \quad \text{and} \quad b(x) \geq \delta , 
\end{equation}
$I_d$ being the identity matrix on $\R^{d}$. We also assume that $P$ is a long range perturbation of the Euclidean Laplacian:
\begin{equation} \tag{H2} \label{a4}
\exists \rho > 0 , \ \forall \alpha \in \N^d \qquad \vert \partial^{\alpha}_x ( A (x) - I_d ) \vert + \vert \partial^{\alpha}_x ( b(x) - 1 ) \vert\lesssim \<x\>^{-\rho-\vert\alpha\vert}. \\
\end{equation}

In particular, if $b=1$, we are concerned with an elliptic operator in divergence form $P = - \dive ( A \nabla )$. On the other hand, if $A = (g^2 g^{i,j}(x))_{i,j},\, b=(\det g^{i,j})^{1/4},\, g=\frac{1}{b}$, then the above operator is unitarily equivalent to the Laplace--Beltrami $- \Delta_{\mathfrak{g}}$ on $(\R^d, \mathfrak{g})$ with metric
\[\mathfrak{g} = \sum_{i,j=1}^{d} g_{i,j} (x) \, d x^i \, d x^j ,\]
where $(g_{i,j})_{i,j}$ is inverse to $(g^{i,j})_{i,j}$ and the unitary transform is just multiplication by $g$. We are mainly interested in the low frequency behaviour, but our result is global in energy if we suppose in addition
\begin{equation} \tag{H3} \label{a51}
P \text{ is non-trapping.}
\end{equation}
In the following, $\Vert \cdot \Vert$ will design the norm on $L^{2} ( \R^{d} )$ or ${\mathcal L}(L^2)$.
Let $H^s$ be the usual Sobolev space on $\R^d$. Then it is well known that $(P,D(P)=H^2)$ is selfadjoint on $L^2$. Let us first rewrite the wave equation associated to $P$ as a first order equation. The wave equation
\begin{equation}  \label{1.1}
\left\{\begin{aligned}
&(\partial_t^2 + P )u = 0 ,   \\
&u (0) = u_0 , \\
&\partial_t u (0) = u_1 ,
\end{aligned} \right.
\end{equation}
is equivalent to the first order equation
\begin{equation} \label{1.2}
\left\{ \begin{aligned}
&i \partial_t \psi = G \psi ,   \\
&\psi (0) = ( u_0 , u_1 ) ,
\end{aligned} \right.
\end{equation}
with $\psi=(u,\partial_t u)$ and
\begin{equation} \label{c9}
G=i\left(\begin{array}{cc} 0 & 1 \\ -P & 0\end{array} \right) .
\end{equation}
We also put $P_0=-\Delta$ and
\begin{equation} \label{c27}
G_0=i\left(\begin{array}{cc} 0 & 1 \\ -P_0 & 0\end{array} \right).
\end{equation}
Let $\dot{H}^1_P$ (resp. $\dot{H}^2_P$) be the completion of $C_0^{\infty}(\R^d)$ in the norm $\Vert u \Vert^{2}_{\dot{H}^1_P} = \< P u , u \>$ (resp. $\Vert u \Vert^{2}_{\dot{H}^2_P} = \< P u , u \> + \Vert P u \Vert^2$). Then it is well known that $(G, ( \dot{H}_P^2 \oplus \dot{H}_P^1 ))$ is selfadjoint on ${\mathcal E}=\dot{H}^1_P\oplus L^2$. We put ${\mathcal H}^s:=H^{s+1}\oplus H^s$. Our main result is the following:

\begin{theorem}\sl \label{b17}
Assume $d \geq 3$ odd, $\mu \geq 0$ and $\rho > \mu + 2$ ($\rho > \mu + 1$ in dimension $d=3$).

$i)$ For all $\chi \in C^{\infty}_{0} ( \R )$ and $\varepsilon > 0$, we have
\begin{equation*}
\Big\Vert \<x\>^{-\mu - 1 - \varepsilon}e^{- i t G} \chi(G) \<x\>^{-\mu - 1 - \varepsilon}\Big\Vert_{{\mathcal L}({\mathcal H}^s)}\lesssim\<t\>^{-\mu} .
\end{equation*}

$ii)$ If we suppose in addition \eqref{a51}, then the above estimate holds globally in energy:
\begin{equation*}
\Big\Vert \<x\>^{-\mu - 1 - \varepsilon} e^{- i t G} \<x\>^{-\mu - 1 - \varepsilon}\Big\Vert_{{\mathcal L}({\mathcal H}^s)}\lesssim\<t\>^{-\mu} .
\end{equation*}
For $d=3$, we can replace $\<x\>^{-\mu-1-\varepsilon}$ by $\<x\>^{-\mu-1/2-\varepsilon}$ in the above estimates. 
\end{theorem}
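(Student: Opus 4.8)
The strategy is to reduce the time-decay estimate to a regularity statement for the weighted resolvent $R(z) = (G-z)^{-1}$ of the wave generator, and then to prove that regularity statement by a combination of a low-frequency analysis and a high/intermediate-frequency analysis. I would begin by writing, via the functional calculus and a representation of $e^{-itG}$ as a contour integral (or an oscillatory integral) of the resolvent, something of the shape
\[
\<x\>^{-\mu-1-\varepsilon} e^{-itG} \chi(G) \<x\>^{-\mu-1-\varepsilon} = \frac{1}{2\pi i}\int_{\R} e^{-it\lambda}\, \<x\>^{-\mu-1-\varepsilon}\big(R(\lambda+i0)-R(\lambda-i0)\big)\chi(\lambda)\<x\>^{-\mu-1-\varepsilon}\, d\lambda .
\]
The decay $\<t\>^{-\mu}$ should then follow by integrating by parts $\mu$ (or rather $\lceil\mu\rceil$, with an interpolation at the end to handle non-integer $\mu$) times in $\lambda$, provided the cut-off resolvent $\<x\>^{-\mu-1-\varepsilon} R(\lambda\pm i0)\<x\>^{-\mu-1-\varepsilon}$ is $C^{\mu+1}$ up to the real axis as a function of $\lambda$, with the derivatives locally integrable (in particular at $\lambda=0$). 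So the heart of the matter is: \emph{establish the $C^{\mu+1}$ regularity of the weighted resolvent of $G$ up to $\R$, uniformly, and with controlled behaviour near the threshold $\lambda=0$}; this is exactly the ``$C^{\rho+1}$ smoothness'' advertised in the introduction, with $\rho$ there playing the role of $\mu$ here and the hypothesis $\rho>\mu+2$ (resp. $\rho>\mu+1$ for $d=3$) being precisely what is needed to afford that many derivatives given the decay rate of the perturbation and the number of weights we are allowed to spend.

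For the \textbf{low-frequency part} I would exploit the algebraic relation between $R(z)$ and the resolvent $(P-z^2)^{-1}$ of the scalar operator $P$: from the $2\times 2$ structure of $G$ one gets explicit formulas expressing the entries of $(G-z)^{-1}$ in terms of $(P-z^2)^{-1}$, $z(P-z^2)^{-1}$, etc. Thus the regularity of the weighted $R(z)$ at $z=0$ is governed by the low-energy expansion of $(P-\zeta)^{-1}$ as $\zeta\to 0$. Here I would invoke the standard limiting-absorption / low-energy resolvent expansions for long-range perturbations of $-\Delta$ on $\R^d$ with $d$ odd (in the spirit of Jensen–Kato and its many descendants): in odd dimension $d\ge 3$ the free resolvent kernel is, up to smooth terms, a polynomial in the spectral parameter times an oscillatory factor, so $\<x\>^{-s}(P-\zeta)^{-1}\<x\>^{-s}$ extends to a function with as many derivatives in $\sqrt{\zeta}$ as the decay $\rho$ and the weight $s$ permit; conjugating back through the $G$-to-$P$ formulas turns the $\sqrt{\zeta}$-regularity in dimension $3$ into genuine $\lambda$-regularity for $G$ (this is one place the odd-dimension hypothesis and the special $d=3$ improvement — weights $\<x\>^{-\mu-1/2-\varepsilon}$ — enter). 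The number of admissible derivatives is counted by $\rho$ versus $\mu$, and the asymmetry in the statement ($\rho>\mu+2$ in general, $\rho>\mu+1$ for $d=3$) reflects exactly that one loses one order of $\sqrt{\zeta}$-regularity per unit of $\lambda$-regularity except in the lowest dimension.

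For the \textbf{high- and intermediate-frequency part} (bounded away from $0$) I would use semiclassical/microlocal resolvent estimates: under the non-trapping hypothesis \eqref{a51} one has the standard non-trapping resolvent bounds $\|\<x\>^{-1/2-\varepsilon} (P-\zeta)^{-1}\<x\>^{-1/2-\varepsilon}\| \lesssim \zeta^{-1/2}$ together with the corresponding bounds on $\partial_\zeta^k$, uniformly for $\re\zeta$ large; these follow from Mourre theory or from a semiclassical parametrix construction, and they immediately give the needed $C^{\mu+1}$-in-$\lambda$ bounds for the weighted $R(\lambda\pm i0)$ away from the origin (with no loss beyond the minimal weights, which is why in part $(i)$, where $\chi$ localizes in energy and the origin may or may not be in the support, the minimal-weight phenomenon is visible, and why part $(ii)$ needs \eqref{a51} to cover all frequencies). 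Matching the low- and high-frequency regimes with a partition of unity $\chi_{\rm low}+\chi_{\rm high}=1$ in the spectral variable, inserting everything into the contour-integral representation, integrating by parts $\lceil\mu\rceil$ times and interpolating for non-integer $\mu$, yields $\<t\>^{-\mu}$; restricting attention to the compactly-supported cut-off $\chi(G)$ as in $(i)$ removes the need for the global high-frequency bounds, and conversely assuming \eqref{a51} upgrades $(i)$ to $(ii)$.

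\textbf{The main obstacle.} I expect the genuinely delicate step to be the low-frequency analysis: proving that $\<x\>^{-\mu-1-\varepsilon} R(\lambda\pm i0)\<x\>^{-\mu-1-\varepsilon}$ is $C^{\mu+1}$ \emph{including at $\lambda=0$}, with integrable derivatives, for a long-range (not merely short-range or compactly supported) perturbation $P$. The free resolvent in odd dimension is explicit and the threshold behaviour is benign there, but the long-range perturbation must be handled by a careful resolvent expansion controlling, at each order, the interplay between the number of derivatives taken, the number of weights $\<x\>^{-\mu-1-\varepsilon}$ available, and the $\<x\>^{-\rho-|\alpha|}$ decay of the coefficients; ensuring that no spurious singularity (resonance or eigenvalue at zero energy) appears and that the bookkeeping closes exactly at the stated thresholds $\rho>\mu+2$ (and $\rho>\mu+1$ for $d=3$) is the crux. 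The passage from $\sqrt{\zeta}$-smoothness for $(P-\zeta)^{-1}$ to $\lambda$-smoothness for $(G-\lambda)^{-1}$, and the careful treatment of the branch point, is the second point where errors are easy to make and where the odd-dimensional and $d=3$ special features must be used precisely.
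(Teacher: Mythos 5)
Your overall route is the paper's: reduce to uniform boundedness and regularity of $\< x \>^{-\mu-1-\varepsilon} R^{j} ( \lambda \pm i 0 ) \< x \>^{-\mu-1-\varepsilon}$ via Stone's formula and integration by parts, prove that regularity by combining the special structure of the free resolvent in odd dimensions with a perturbative resolvent expansion whose weight bookkeeping is governed by $\rho$, and invoke non-trapping only for the high-frequency part of $ii)$. Two points in your plan, however, do not close as stated.

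First, the treatment of non-integer $\mu$. You propose to integrate by parts $\lceil \mu \rceil$ times ``with an interpolation at the end''. But integrating by parts $\lceil \mu \rceil$ times requires control of $R^{\lceil \mu \rceil + 1} ( \lambda \pm i 0 )$, which by the weight/decay bookkeeping needs $\rho > \lceil \mu \rceil + 2$; the hypothesis only gives $\rho > \mu + 2$, and one cannot interpolate between the integer endpoints $\lfloor \mu \rfloor$ and $\lceil \mu \rceil$ because the upper endpoint is not available under the stated assumption on $\rho$. The paper instead integrates by parts only $\lfloor \mu \rfloor$ times, proves \emph{H\"older continuity} of order $\alpha = \mu - \lfloor \mu \rfloor$ of $\lambda \mapsto \< x \>^{-\kappa-\alpha-\varepsilon} R^{\lfloor \mu \rfloor + 1} ( \lambda \pm i 0 ) \< x \>^{-\kappa-\alpha-\varepsilon}$ under exactly $\rho > \mu + 2$ (Proposition \ref{c19}, itself obtained by interpolating the resolvent bounds for consecutive powers, not the final time-decay estimates), and converts that H\"older continuity into the extra factor $t^{\lfloor \mu \rfloor - \mu}$ by the half-period translation $\lambda \mapsto \lambda + \pi / t$ in the oscillatory integral, as in \cite{FrGrSi08_01}. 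This step is genuinely needed and is missing from your plan.

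Second, the low-frequency mechanism. You propose Jensen--Kato-type low-energy kernel expansions of $( P - \zeta )^{-1}$ and then transfer to $G$. The paper uses the explicit kernel only in $d = 3$ (and only to gain the half-power improvement in the weights); for general odd $d$ the powers of the \emph{free} resolvent are controlled by writing $( G_0 - z )^{-k}$ as a time integral of $t^{k-1} e^{-it G_0}$ and proving $\Vert \< x \>^{-\alpha} e^{-itG_0} \< x \>^{-\alpha} \Vert \lesssim \< t \>^{-\alpha}$ from the strong Huygens principle --- this is where odd-dimensionality actually enters, and it avoids any kernel expansion in higher dimensions. The perturbed powers are then handled not by a threshold expansion of $( P - \zeta )^{-1}$ but by an explicit combinatorial identity $R^{k} = \sum M_0 V \cdots V M_n$ (Lemma \ref{c1}) in which each factor $V$ absorbs a weight $\< x \>^{\alpha_j + \alpha_{j+1} + 2\varepsilon}$ thanks to $\alpha_j + \alpha_{j+1} \leq k + 1 < \rho$, with the only genuinely analytic input being the uniform bound on the single weighted resolvent $\< x \>^{-1-\varepsilon} ( P - z^2 )^{-1} \< x \>^{-1-\varepsilon}$ near $z = 0$ imported from \cite{BoHa10_02}. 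Your instinct that the long-range low-frequency analysis is the crux is right, but ``standard'' expansions for potentials do not cover long-range metric perturbations, and the paper's actual argument is structured to need only the first-order limiting absorption bound there.
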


\begin{remark}\sl
Combining the previous theorem with \cite{BoHa10_03} and an interpolation argument, we can replace $\< x \>^{- \mu - 1 - \varepsilon}$ by $\< x \>^{- \mu - \varepsilon}$ in Theorem \ref{b17} if $\rho = + \infty$ and $\mu > 1$.
\end{remark}

Note that one can express the wave propagator at low frequencies in terms of $P$ using the classical formula
\begin{equation} \label{b5}
e^{- i t G} = \left( \begin{array}{cc} \cos t \sqrt{P} & \ds \frac{\sin t \sqrt{P}}{ \sqrt{P}}  \\
- \sqrt{P} \sin t \sqrt{P} & \cos t \sqrt{P} \end{array} \right) ,
\end{equation}
and that $\chi ( G ) = \chi ( \sqrt{P} ) \oplus \chi ( \sqrt{P} )$ for $\chi$ even. The proof of Theorem \ref{b17} rests on the following smoothness property of the weighted resolvent at low frequencies (see also the H\"{o}lder regularity stated in Proposition \ref{c19}).

\begin{theorem}\sl \label{b16}
Assume $d \geq 3$ odd, $k \in \N^{*}$ and $\rho > k + 1$ ($\rho > k$ for $d = 3$). Let $\kappa=k$ ($\kappa = k - 1 / 2$ for $d=3$, $k \geq 2$). Then, for all $s \in \R$ and $C , \varepsilon > 0$, we have
\begin{equation*}
\sup_{z \in \C \setminus \R , \, \vert z \vert \leq C} \big\Vert \< x \>^{- \kappa - \varepsilon} ( G - z )^{-k} \< x \>^{- \kappa - \varepsilon} \big\Vert_{\CL ( \CH^s, \CH^{s + k} )} \lesssim 1 .
\end{equation*}
\end{theorem}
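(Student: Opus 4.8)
The plan is to reduce everything to known low-frequency resolvent estimates for the scalar operator $P$ and then bootstrap in $k$. Using the matrix structure \eqref{c9}, one computes $(G-z)^{-1}$ explicitly in terms of $(P-z^2)^{-1}$: writing $R(z^2) = (P-z^2)^{-1}$, the resolvent of $G$ has entries built from $R(z^2)$, $z R(z^2)$, $P R(z^2)$, and $z$. The key analytic input — which I would take from the literature on low-frequency resolvent expansions for long-range perturbations of $-\Delta$ on $\R^d$, $d$ odd (Jensen--Kato type results, as alluded to in the introduction via \cite{JeKa79_01}) — is that $\langle x\rangle^{-\sigma} R(\lambda) \langle x\rangle^{-\sigma}$ extends to a $C^{m}$ function of $\lambda$ up to $\lambda = 0$, with $m$ governed by $\rho - 2\sigma$ (the half-integer shift in $d=3$ reflecting the $\sqrt{z}$-type singularity that appears there but not in higher odd dimensions). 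Differentiating the explicit formula for $(G-z)^{-k}$ in $z$ then expresses $\partial_z^{j}$ of the weighted resolvent in terms of weighted derivatives $\partial_{\lambda}^{\ell} R(\lambda)$ with $\ell \le j$, evaluated at $\lambda = z^2$, together with elementary factors of $z$ and $P R$.

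Concretely, I would proceed in the following steps. First, fix a cutoff separating low and high (or intermediate) frequencies; away from $z=0$ the estimate is standard (non-trapping or, for the local-in-energy statement, a consequence of the limiting absorption principle and compactness), so all the work is near $z=0$. Second, establish the case $k=1$: here $\kappa = 1$ (resp. $1/2$ in $d=3$, but note $k=1$ is excluded from the half-integer clause, so $\kappa=1$ there too as stated), and the bound on $\langle x\rangle^{-\kappa-\varepsilon}(G-z)^{-1}\langle x\rangle^{-\kappa-\varepsilon}$ follows from the boundedness of $\langle x\rangle^{-1-\varepsilon}R(z^2)\langle x\rangle^{-1-\varepsilon}$ on $L^2$ uniformly up to $z=0$, keeping track of the mapping properties $R(z^2): H^s \to H^{s+2}$ and the fact that one derivative is "free" in the off-diagonal entries. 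The condition $\rho > 2$ (resp. $\rho > 1$) is exactly what the scalar theory needs for continuity of the weighted resolvent up to threshold. Third, run the induction on $k$ using the resolvent identity $(G-z)^{-k} = \frac{1}{(k-1)!}\partial_z^{k-1}(G-z)^{-1}$ (or iterated $(G-z)^{-1}$), reducing the bound on $\langle x\rangle^{-k-\varepsilon}(G-z)^{-k}\langle x\rangle^{-k-\varepsilon}$ to $C^{k-1}$-regularity (with appropriate weights $\langle x\rangle^{-(k)-\varepsilon}$ absorbing $k$ successive differentiations) of the scalar weighted resolvent, which holds under $\rho > k+1$ (resp. $\rho > k$).

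The main obstacle, and where the odd-dimensional hypothesis is essential, is the threshold behaviour at $z=0$: one must show that differentiating the weighted resolvent $\langle x\rangle^{-\sigma} R(\lambda)\langle x\rangle^{-\sigma}$ in $\lambda$ up to order $k$ does not produce a singularity worse than what the weights can absorb, i.e. that no resonance or eigenvalue sits at $0$ and that the low-energy expansion of $R(\lambda)$ in odd dimension $d\ge 5$ is smooth (integer powers of $\lambda$), while in $d=3$ the expansion contains a $\lambda^{1/2}$ term, forcing the half-integer loss $\kappa = k - 1/2$. I would handle this by invoking the meromorphic/analytic Fredholm machinery for the free resolvent $R_0(\lambda) = (-\Delta - \lambda)^{-1}$ in odd dimensions (whose kernel is explicit and entire in $\sqrt{\lambda}$, polynomial in $\sqrt{\lambda}$ times the exponential for $d$ odd, hence genuinely smooth in $\lambda$ for $d \ge 5$), combined with a Neumann series / Fredholm-inverse argument to transfer this regularity from $P_0$ to $P$ under the long-range decay \eqref{a4}, using \eqref{a3} to rule out a zero-energy obstruction. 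Once that scalar regularity is in hand, the passage to $G$ and the quantitative $\langle x\rangle$-weighted bounds are a bookkeeping exercise with the explicit $2\times 2$ formula for $(G-z)^{-k}$ and the Sobolev mapping properties of $R(z^2)$.
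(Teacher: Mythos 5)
Your step for $k=1$ is sound and matches the paper (the bound on $\<x\>^{-1-\varepsilon}(P-z^2)^{-1}\<x\>^{-1-\varepsilon}$ is exactly the input taken from the companion paper on low frequency resolvent estimates). The gap is in the passage to $k\ge 2$. The central claim of your argument --- that $\<x\>^{-\sigma}(P-\lambda)^{-1}\<x\>^{-\sigma}$ is $C^{m}$ up to the threshold with $m$ governed by $\rho-2\sigma$, to be ``taken from the literature'' --- is essentially the statement to be proved, and it is not available in the sources you invoke: Jensen--Kato type expansions are established for $-\Delta+V$ with a decaying potential, not for a long-range, non-compactly supported, second-order (metric) perturbation. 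Your proposed transfer mechanism from $P_0$ to $P$ via a ``Neumann series / Fredholm-inverse argument'' does not close: each insertion of $V=G_0-G$ costs $\rho$ powers of spatial decay and loses a derivative, while each power of the free resolvent consumes roughly one power of weight on each side, so an infinite series is impossible with finite $\rho$, and even a finite iteration needs a precise accounting of how the total weight $\<x\>^{-k-\varepsilon}$ is distributed among the factors. The paper supplies exactly this: a \emph{finite} resolvent-identity expansion (Lemma \ref{c1}) writes $R^{k}(z)$ as a sum of products $R_0^{\alpha_0}V\cdots V R_0^{\alpha_n}$ in which the perturbed resolvent appears only to the first power, with the constraints $\alpha_j+\alpha_{j+1}\le k+1<\rho$ and $\sum_j\alpha_j=n+k$ guaranteeing that each $V$ absorbs $\<x\>^{\alpha_j+\varepsilon}\cdots\<x\>^{\alpha_{j+1}+\varepsilon}$ and each $R_0^{\alpha_j}$ is controlled by Proposition \ref{b1}. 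That proposition is where odd dimension actually enters for $d\ge5$: not through an explicit kernel expansion, but through the strong Huygens principle applied to $(G_0-z)^{-k}=i\int_0^{\infty}\frac{(it)^{k-1}}{(k-1)!}e^{-it(G_0-z)}\,dt$, which gives $\big\Vert\<x\>^{-\alpha}e^{-itG_0}\<x\>^{-\alpha}\big\Vert\lesssim\<t\>^{-\alpha}$. Your sketch contains no substitute for either ingredient.

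A second, more local error: you describe $\kappa=k-1/2$ in $d=3$ as a ``half-integer loss forced by a $\lambda^{1/2}$ term''. It is the opposite: a smaller $\kappa$ is a \emph{weaker} weight, hence an improvement, and it has nothing to do with $\sqrt{\lambda}$-singularities --- the spectral parameter of $G$ is already $z=\sqrt{\lambda}$, in which variable the free kernel is entire. The gain comes from the explicit $d=3$ kernel of $\partial_z^{k-1}(G_0-z)^{-1}$, whose terms $z^{\beta}\vert x-y\vert^{\gamma-1}e^{iz\vert x-y\vert}$ satisfy $\vert x-y\vert^{\gamma-1}\le\<x\>^{\gamma-1}\<y\>^{\gamma-1}$, leaving a Hilbert--Schmidt kernel $\<x\>^{-3/2-\varepsilon}\<y\>^{-3/2-\varepsilon}$ once $\kappa=\gamma-1+3/2$, whence $\kappa=k-1/2$ in the worst case $\gamma=k-1$ (Proposition \ref{b13}, supplemented by Lemma \ref{c2} for the derivative factors in $V$). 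As written, the proposal does not yield the theorem.
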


The polynomial decay of the local energy (resp. the $C^{k}$ smoothness of the weighted resolvent) for polynomially decaying perturbations is analogous to the exponential decay of the local energy (resp. the analytic extension of the resolvent) given by the resonance theory for compactly supported or exponentially decaying perturbations.

\section{The free resolvent}  \label{s2}

The goal of this section is to show the following estimate on the free resolvent.

\begin{proposition}\sl \label{b1}
Let $d \geq 3$ be odd. For all $k\in \N$, $s \in \R$ and $C , \varepsilon >0$, we have
\begin{equation*}
\sup_{z \in \C \setminus \R , \ \vert z \vert \leq C} \big\Vert \< x \>^{- k - \varepsilon} ( G_{0} - z )^{-k} \< x \>^{- k - \varepsilon} \big\Vert_{\CL ( \CH^s , \CH^{s + k} )} \lesssim 1 .
\end{equation*}
\end{proposition}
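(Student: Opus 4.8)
The plan is to reduce the estimate for the matrix operator $(G_0 - z)^{-k}$ to estimates on powers of the scalar free resolvent $R_0(\lambda) = (P_0 - \lambda^2)^{-1}$, using the explicit block structure of $G_0$. Since $G_0 = i\begin{pmatrix} 0 & 1 \\ -P_0 & 0 \end{pmatrix}$, one has $G_0^2 = P_0 \oplus P_0$, so $(G_0 - z)^{-1}$ can be written in terms of $(P_0 - z^2)^{-1}$ and $(P_0 - z^2)^{-1} P_0$ by the formula $(G_0 - z)^{-1} = (G_0 + z)(G_0^2 - z^2)^{-1}$; concretely the four blocks are (up to the factor $i$ and signs) $z(P_0 - z^2)^{-1}$, $(P_0 - z^2)^{-1}$, $P_0 (P_0 - z^2)^{-1}$ and $z(P_0 - z^2)^{-1}$. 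Writing $z^2 = \lambda^2$ with $\im\lambda \ne 0$, the problem becomes: control $\langle x\rangle^{-k-\varepsilon} R_0(\lambda)^k \langle x\rangle^{-k-\varepsilon}$ and the analogous expressions with extra factors of $\lambda$ or of $P_0$, as operators on the appropriate Sobolev scale, uniformly for $\lambda$ in a bounded set with $\im\lambda\ne 0$ (both half-planes). The gain of $k$ derivatives on $\CH^s \to \CH^{s+k}$ reflects precisely the $k$-fold application of $R_0$, each instance of which maps $H^s \to H^{s+2}$ away from $z=0$ but only $H^s \to H^{s+1}$ uniformly up to $z = 0$ because of the threshold.

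The key analytic input is the low-frequency behaviour of $R_0(\lambda)$ between weighted spaces in odd dimensions. Here I would invoke the classical fact (going back to Jensen–Kato-type expansions, and in the form needed here using the odd-dimensional structure of the free resolvent kernel) that $\lambda \mapsto \langle x\rangle^{-1/2-\varepsilon} R_0(\lambda) \langle x\rangle^{-1/2-\varepsilon}$ extends continuously, indeed with a certain amount of regularity, across $\lambda = 0$; more precisely, in odd dimension $d\ge 3$ the kernel of $R_0(\lambda)$ is of the form $e^{i\lambda|x-y|}$ times a polynomial in $(\lambda|x-y|)^{-1}$, which makes $\partial_\lambda^j R_0(\lambda)$ a bounded operator $\langle x\rangle^{-j-1-\varepsilon} L^2 \to \langle x\rangle^{j+1+\varepsilon} L^2$ and allows differentiation in $\lambda$ at the cost of weights. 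Differentiating $R_0(\lambda)^k$ a total of $k$ times and applying the chain/Leibniz rule produces terms with up to $k$ resolvent factors and total weight shift $2k$; distributing the weights $\langle x\rangle^{-k-\varepsilon}$ on the two ends and absorbing the intermediate weights against the resolvent mapping properties is what forces the hypothesis $\rho$-type conditions to disappear (there is no perturbation here) and what fixes the exponent $k+\varepsilon$. The high-frequency (but still $|z|\le C$, so really intermediate-frequency) regime is handled by the standard non-trapping/flat resolvent estimate $\|\langle x\rangle^{-1/2-\varepsilon} R_0(\lambda) \langle x\rangle^{-1/2-\varepsilon}\| \lesssim \langle\lambda\rangle^{-1}$ together with elliptic regularity, which on the bounded set $|z|\le C$ is uniform and loses nothing.

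The step I expect to be the main obstacle is bookkeeping the simultaneous requirements of (a) uniformity as $\im z \to 0$ including at the threshold $z = 0$, (b) the precise weight $\langle x\rangle^{-k-\varepsilon}$ rather than something larger, and (c) the full $\CH^s \to \CH^{s+k}$ mapping with the correct number of derivatives gained in each block of the $2\times 2$ matrix — note the blocks are not symmetric, e.g. the $P_0(P_0-z^2)^{-1}$ block gains no derivatives, so one must check that after composing with the surrounding weights and the Sobolev indices built into $\CH^s = H^{s+1}\oplus H^s$ everything still closes. Concretely I would (i) set up the algebraic identity for the blocks of $(G_0-z)^{-k}$ as a sum of terms $z^a P_0^b R_0(\lambda)^c$ with $c \le k$; (ii) prove the scalar weighted estimate $\|\langle x\rangle^{-k-\varepsilon} \partial_\lambda^{k-1} R_0(\lambda)^{\,\cdot}\langle x\rangle^{-k-\varepsilon}\|$-type bounds by iterating the single-resolvent low-energy expansion; (iii) patch low and intermediate frequencies with a dyadic or smooth cutoff in $\lambda$; and (iv) translate back to $\CH^s$ by commuting $\langle D\rangle^s$ through (it commutes with $P_0$ and $R_0$, and commutators with $\langle x\rangle^{-k-\varepsilon}$ are lower order and harmless). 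None of the individual pieces is deep, but getting the weight exponent to come out exactly $k+\varepsilon$ uniformly at $z=0$ is the delicate point, and it is exactly here that odd dimensionality is used.
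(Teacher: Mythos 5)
Your plan is a stationary (kernel-based) argument: write the blocks of $(G_0-z)^{-k}$ in terms of $\partial_z$-derivatives of the explicit odd-dimensional kernel of $(P_0-z^2)^{-1}$ and then bound the resulting weighted kernels. This is genuinely different from the paper's proof of Proposition \ref{b1}, which works in the time domain: it writes $(G_0-z)^{-k}=\frac{i^k}{(k-1)!}\int_0^\infty t^{k-1}e^{-it(G_0-z)}\,dt$ for $\im z>0$ and reduces everything to the single propagator bound $\Vert\<x\>^{-\alpha}e^{-itG_0}\<x\>^{-\alpha}\Vert_{\CL(H^1\oplus L^2)}\lesssim\<t\>^{-\alpha}$ (Lemma \ref{b6}), which is proved by splitting on $|x|\lessgtr t/2$ and invoking the strong Huygens principle to kill the interior-interior term; the weight $k+\varepsilon$ then appears simply as the condition $\int_0^\infty t^{k-1}\<t\>^{-k-\varepsilon}\,dt<\infty$. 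Your route is essentially the paper's proof of Proposition \ref{b13} --- which the paper deploys only in $d=3$ --- extrapolated to all odd $d$.

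The gap is in your step (ii), and it is not mere bookkeeping. In odd dimension $d$ the kernel of $(P_0-z^2)^{-1}$ is a sum of terms $c_j\,z^{(d-3)/2-j}\,r^{-(d-1)/2-j}e^{izr}$, $0\le j\le(d-3)/2$, $r=|x-y|$. After taking $k-1$ derivatives in $z$ the most singular term $r^{-(d-2)}$ and the largest-power term $r^{k-1-(d-1)/2}$ are indeed controllable by absolute-value (Schur/Young) bounds as in Lemma \ref{b14}. But the intermediate oscillating terms are not: already for $k=1$, $d=5$, the term $z\,e^{izr}r^{-2}$ sandwiched between $\<x\>^{-1-\varepsilon}$ fails every absolute-value test (the Schur integral $\int|x-y|^{-2}\<y\>^{-1-\varepsilon}dy$ grows like $\<x\>^{2-\varepsilon}$ in $\R^5$), and its $L^2$-boundedness with weight $1+\varepsilon$ uniformly down to real $z$ relies entirely on the oscillation of $e^{izr}$ --- i.e.\ on a limiting-absorption-type input, which is exactly what you have not supplied uniformly through the threshold $z=0$. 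The weight your kernel-size argument actually yields grows like $(d+1)/4$ with the dimension, not like $k+\varepsilon$. Two smaller but real defects: the asserted continuity of $\<x\>^{-1/2-\varepsilon}(P_0-\lambda^2)^{-1}\<x\>^{-1/2-\varepsilon}$ across $\lambda=0$ is false (weight $1+\varepsilon$ is needed, as the $k=1$ case of the statement already indicates), and converting $\partial_\lambda$-derivatives into powers of $(P_0-\lambda^2)^{-1}$ produces factors $(2\lambda)^{-1}$ that are singular at the threshold --- one must differentiate in the spectral parameter $z$ of $G_0$, as in \eqref{b15}. The time-domain argument avoids all of this because finite propagation speed plus strong Huygens encodes the oscillation exactly, uniformly in $z$ up to the real axis and through $z=0$.
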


To prove this result, we will write the free resolvent as an integral in time over the evolution and then use the strong Huygens principle. Note that we estimate the powers of the resolvent in a scale of Sobolev spaces rather than in a scale of energy spaces. We therefore first need rough estimates for the evolution on $H^1 \oplus L^2$.

\begin{lemma}\sl \label{b2}
Uniformly for $t \in \R$, we have
\begin{gather}
\big\Vert e^{- i t G_{0}} \big\Vert_{\CL (H^1\oplus L^2)} \lesssim \<t\> , \label{b3}    \\
\big\Vert e^{- i t G_{0}} \< x \>^{-1} \big\Vert_{\CL (H^1\oplus L^2)} \lesssim 1 .   \label{b4}
\end{gather}
\end{lemma}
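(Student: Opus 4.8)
The plan is to reduce everything to spectral estimates for $P_0 = -\Delta$ via the explicit formula \eqref{b5} for $e^{-itG_0}$, which expresses the propagator in terms of $\cos t\sqrt{P_0}$, $\sin t\sqrt{P_0}/\sqrt{P_0}$, and $\sqrt{P_0}\sin t\sqrt{P_0}$. The matrix entries $\cos t\sqrt{P_0}$ and $\sqrt{P_0}\sin t\sqrt{P_0}$ are bounded on $L^2$ uniformly in $t$ by the spectral theorem (the latter maps $H^1 \to L^2$), so the only term requiring care is the off-diagonal entry $\frac{\sin t\sqrt{P_0}}{\sqrt{P_0}}$, which is a Fourier multiplier with symbol $\frac{\sin(t|\xi|)}{|\xi|}$. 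This symbol is bounded by $|t|$ for $|\xi|$ small and by $|\xi|^{-1}$ for $|\xi|$ large; writing $\min(|t|, |\xi|^{-1}) \lesssim \<t\> \<\xi\>^{-1}$ crudely, one sees that $\frac{\sin t\sqrt{P_0}}{\sqrt{P_0}}$ is bounded from $L^2 \to H^1$ with norm $\lesssim \<t\>$, which yields \eqref{b3} immediately once I track the mapping properties through the matrix.

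For \eqref{b4} the point is to trade the temporal growth for a weight. First I would dispose of the high-frequency part: for $\chi \in C_0^\infty(\R)$ equal to $1$ near $0$, the operator $(1-\chi)(P_0) \frac{\sin t\sqrt{P_0}}{\sqrt{P_0}}$ has symbol supported in $|\xi| \gtrsim 1$, hence is bounded $L^2 \to H^1$ uniformly in $t$ with no weight needed, and similarly for the diagonal entries; so it suffices to treat $\chi(P_0) e^{-itG_0} \<x\>^{-1}$. For the low-frequency piece the key estimate is the standard local smoothing / local energy bound for the free wave and Schr\"odinger-type operators: $\big\Vert \<x\>^{-1} \frac{\sin t\sqrt{P_0}}{\sqrt{P_0}} \chi(P_0) \<x\>^{-1}\big\Vert_{\CL(L^2)} \lesssim \<t\>^{0}$ is false in general, so instead I would use that $\<x\>^{-1}$ applied on the right lets one integrate: write $\frac{\sin t\sqrt{P_0}}{\sqrt{P_0}} = \int_0^t \cos(s\sqrt{P_0})\, ds$ is not enough either. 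The cleaner route is to use the classical resolvent bound $\big\Vert \<x\>^{-1}(P_0 - \lambda^2 \mp i0)^{-1}\<x\>^{-1}\big\Vert \lesssim \<\lambda\>^{-1}$ valid in dimension $d \geq 3$ uniformly down to $\lambda = 0$ (Kato--Jensen, or the explicit kernel), combined with Stone's formula: $\<x\>^{-1}\frac{\sin t\sqrt{P_0}}{\sqrt{P_0}}\<x\>^{-1} = \frac{1}{2\pi i}\int_0^\infty \frac{\sin(t\lambda)}{\lambda}\big[\<x\>^{-1}(R_0(\lambda^2+i0) - R_0(\lambda^2-i0))\<x\>^{-1}\big] 2\lambda\, d\lambda$, so that the boundedness of the spectral density between weighted spaces together with $|\sin(t\lambda)/\lambda| \le |t| \wedge \lambda^{-1}$ and a low-frequency cutoff gives a $t$-independent bound after one integration by parts in $\lambda$ exploiting the $C^1$ regularity of the weighted spectral measure near $\lambda = 0$ (this is where $d$ odd, $d\ge 3$, and the strong Huygens structure make the density smooth). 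Alternatively — and this is probably what the authors do — one invokes the finite speed of propagation together with the Huygens principle in odd dimensions: the kernel of $\frac{\sin t\sqrt{P_0}}{\sqrt{P_0}}$ is supported on the sphere $|x-y| = |t|$, so testing against $\<x\>^{-1}$ on the right and $\<x\>^{-1}$ (implicitly, through the $H^1$ target on a compact set when composed downstream) localizes; more directly, the $H^1 \to$ local-$L^2$ mapping norm of $e^{-itG_0}\<x\>^{-1}$ stays bounded because the $\<x\>^{-1}$ weight on the right confines the relevant data to a region that, under the flow, spreads its $L^2$ mass over a sphere of radius $|t|$ and area $\sim |t|^{d-1}$, so the energy density decays and compensates the $\<t\>$ growth of \eqref{b3}.

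The main obstacle is the genuinely delicate low-frequency behaviour of the off-diagonal multiplier $\sin(t|\xi|)/|\xi|$: near $\xi = 0$ this is $\sim t$, and one must show that the weight $\<x\>^{-1}$ is exactly strong enough to absorb this linear growth uniformly in $t$ — not merely for large $|t|$ (where propagation/Huygens helps) but across all $t$, including the transition regime $|t| \sim 1$. I expect the argument to hinge on the uniform-up-to-zero-energy bound $\Vert \<x\>^{-1} R_0(z) \<x\>^{-1}\Vert \lesssim 1$ in $d \geq 3$ (the well-known fact that $0$ is not a resonance/eigenvalue of $-\Delta$ on $\R^d$ for $d\ge 3$), fed into Stone's formula with one integration by parts, the boundary terms and the derivative term being controlled because $\rho$ is immaterial here (the free case) and the spectral density $\<x\>^{-1}\,\mathrm{Im}\,R_0(\lambda^2+i0)\,\<x\>^{-1}$ is $C^1$ in $\lambda \geq 0$ with the right decay. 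Everything downstream — Proposition \ref{b1} — then follows by iterating this with the time-integral representation of the resolvent powers and Huygens' principle, as announced.
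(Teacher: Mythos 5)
Your treatment of \eqref{b3} is fine and is essentially the paper's argument: via \eqref{b5} everything reduces to the multiplier bound $|\sin(t|\xi|)/|\xi|| \lesssim \<t\>\<\xi\>^{-1}$, the diagonal entries and $\sqrt{P_0}\sin t\sqrt{P_0}$ being uniformly bounded by the spectral theorem.

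For \eqref{b4}, however, there is a genuine gap. The estimate you actually need is \emph{one-sided}: after removing the harmless high-frequency part, the whole lemma reduces to showing that
\begin{equation*}
\Big\Vert \<P_0\>^{1/2}\,\frac{\sin t\sqrt{P_0}}{\sqrt{P_0}}\,\chi(P_0\le 1)\,\<x\>^{-1}\Big\Vert_{\CL(L^2)}\lesssim 1
\end{equation*}
with a weight only on the right and nothing on the left. Your main proposed route (Stone's formula plus the limiting absorption bound $\Vert \<x\>^{-1}R_0(\lambda^2\pm i0)\<x\>^{-1}\Vert\lesssim 1$) inherently requires weights on \emph{both} sides of the spectral density, so it proves the two-sided estimate $\Vert\<x\>^{-1}e^{-itG_0}\<x\>^{-1}\Vert\lesssim 1$, which is strictly weaker and does not yield \eqref{b4}; note that the one-sided version is exactly what is consumed later (in Lemma \ref{b6}, the terms $I_1$ and $I_2$ need $\Vert e^{-itG_0}\<x\>^{-\alpha}\Vert$ and $\Vert\<x\>^{-\alpha}e^{-itG_0}\Vert$ separately, with the $\<t\>^{-\alpha}$ decay coming from the spatial cutoff, not from the propagator). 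Your alternative via Huygens/finite speed of propagation is the mechanism of Lemma \ref{b6}, not of this lemma, and as stated (``the energy density decays and compensates the $\<t\>$ growth'') it is a heuristic, not a proof, particularly in the regime $|t|\sim 1$ with only one weight available. The missing idea is elementary: by Hardy's inequality $\Vert\,|x|^{-1}u\Vert\lesssim\Vert\,|\nabla|u\Vert$ in $d\ge 3$, transferred by the Fourier transform to $\Vert P_0^{-1/2}u\Vert\lesssim\Vert\,|x|\,u\Vert$, one gets the one-sided bound $\Vert P_0^{-1/2}\<x\>^{-1}\Vert_{\CL(L^2)}\lesssim 1$; since $\sin(t\sqrt{P_0})$ is a contraction and $\<P_0\>^{1/2}\chi(P_0\le1)$ is bounded, the displayed estimate follows with no $t$-dependence at all. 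You circle near this (``$0$ is not a resonance of $-\Delta$ for $d\ge3$'') but never land on the one-sided Hardy bound that actually closes the argument.
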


\begin{proof}
Using the functional calculus, we obtain
\begin{gather*}
\big\Vert \cos t \sqrt{P_0} \big\Vert_{{\mathcal L}(H^1)} \lesssim 1 , \qquad \big\Vert \cos t \sqrt{P_0} \big\Vert_{{\mathcal L} (L^2)} \lesssim 1 ,   \\
\big\Vert - \sqrt{P_0} \sin t \sqrt{P_0} \big\Vert_{{\mathcal L} (H^1 , L^2)} = \big\Vert - \sqrt{P_0} \sin ( t \sqrt{P_0} ) \< P_0 \>^{-1/2} \big\Vert_{{\mathcal L} (L^2)} \lesssim 1 ,  \\
\begin{aligned}
\Big\Vert \frac{\sin t\sqrt{P_0}}{\sqrt{P_0}} \Big\Vert_{{\mathcal L}(L^2,H^1)} ={}& \Big\Vert \<P_0\>^{1/2}\frac{\sin t\sqrt{P_0}}{\sqrt{P_0}} \Big\Vert_{{\mathcal L}(L^2)}   \\
\lesssim{}& \Big\Vert \<P_0\>^{1/2} t \frac{\sin t \sqrt{P_0}}{t \sqrt{P_0}} \chi( P_0 \leq 1) \Big\Vert_{{\mathcal L}(L^2)}   \\
&+ \Big\Vert \< P_0 \>^{1/2} \frac{\sin t \sqrt{P_0}}{\sqrt{P_0}} \chi ( P_0 \geq 1 ) \Big\Vert_{{\mathcal L}(L^2)} \lesssim \< t \> .
\end{aligned}
\end{gather*}
Combined with \eqref{b5}, this implies the first estimate.

We now prove \eqref{b4}. Using the previous arguments, we only have to show
\begin{gather*}
\Big\Vert \< P_0 \>^{1/2} \frac{\sin t \sqrt{P_0}}{\sqrt{P_0}} \chi ( P_0 \leq 1 ) \< x \>^{- 1} \Big\Vert_{\CL {(L^2)}} \lesssim 1 .
\end{gather*}
By the classical Hardy estimate, we have
\begin{equation*}
\Big\Vert \frac{1}{\vert x \vert} u \Big\Vert \lesssim \big\Vert \vert \nabla \vert u \big\Vert ,
\end{equation*}
which gives by Fourier transform 
\begin{equation*}
\Big\Vert \frac{1}{\sqrt{P_0}} u \Big\Vert \lesssim \big\Vert \vert x \vert u \big\Vert \lesssim \big\Vert \< x \> u \big\Vert .
\end{equation*}
We conclude that
\begin{equation*}
\Big\Vert \< P_0 \>^{1/2} \frac{\sin t \sqrt{P_0}}{\sqrt{P_0}} \chi ( P_0 \leq 1 ) \< x \>^{- 1} \Big\Vert \lesssim \Big\Vert \frac{1}{\sqrt{P_0}} \< x \>^{- 1} \Big\Vert \lesssim 1 ,
\end{equation*}
and the second estimate of the lemma follows.
\end{proof}

The following estimate on the free evolution is fundamental for the proof of Proposition \ref{b1}.

\begin{lemma}\sl \label{b6}
Let $d \geq 3$ be odd and $\alpha \geq 1$. Then, uniformly in $t \in \R$, we have
\begin{equation*}
\big\Vert \< x \>^{- \alpha} e^{- i t G_{0}} \< x \>^{-\alpha} \big\Vert_{\CL ( H^1 \oplus L^2 )} \lesssim \< t \>^{- \alpha} .
\end{equation*}
\end{lemma}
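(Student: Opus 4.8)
The plan is to split $\langle x\rangle^{-\alpha}\psi$ at the spatial scale $|x|\sim|t|$ and treat the two resulting pieces by complementary mechanisms: the strong Huygens principle for the ``inner'' piece and a uniform weighted evolution bound for the ``outer'' piece. Since the case $|t|\le1$ follows at once from \eqref{b4} together with the boundedness of multiplication by $\langle x\rangle^{-\alpha}$ and $\langle x\rangle^{1-\alpha}$ on $H^1\oplus L^2$ (which uses $\alpha\ge1$), I fix $|t|\ge1$. Choosing $\chi\in C_0^\infty(\R)$ with $\chi\equiv1$ on $[-1/2,1/2]$ and $\supp\chi\subset[-1,1]$, I write $\langle x\rangle^{-\alpha}\psi=\Phi_{\mathrm{in}}+\Phi_{\mathrm{out}}$, where $\Phi_{\mathrm{in}}=\chi(2|x|/|t|)\langle x\rangle^{-\alpha}\psi$ is supported in $\{|x|\le|t|/2\}$ and $\Phi_{\mathrm{out}}=(1-\chi(2|x|/|t|))\langle x\rangle^{-\alpha}\psi$ is supported in $\{|x|\ge|t|/4\}$, and estimate $\langle x\rangle^{-\alpha}e^{-itG_0}\Phi_{\mathrm{in}}$ and $\langle x\rangle^{-\alpha}e^{-itG_0}\Phi_{\mathrm{out}}$ separately.

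For the outer piece I would use the left-hand counterpart of \eqref{b4}, namely $\|\langle x\rangle^{-1}e^{-itG_0}\|_{\CL(H^1\oplus L^2)}\lesssim1$ uniformly in $t$; it is proved just as in Lemma \ref{b2}, the $\cos t\sqrt{P_0}$ and $\sqrt{P_0}\sin t\sqrt{P_0}$ entries being handled by the functional calculus, and the $\frac{\sin t\sqrt{P_0}}{\sqrt{P_0}}$ entry by writing $|\nabla|\frac{\sin t|\nabla|}{|\nabla|}=\sin(t|\nabla|)$ and applying the Hardy bound $\|\,|x|^{-1}v\|\lesssim\|\nabla v\|$ (equivalently, by self-adjointness, $\|\langle x\rangle^{-1}\frac{1}{\sqrt{P_0}}\|_{\CL(L^2)}\lesssim1$, which is the adjoint of the inequality already used for \eqref{b4}). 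Since $\langle x\rangle^{-\alpha}=\langle x\rangle^{1-\alpha}\langle x\rangle^{-1}$ and multiplication by $\langle x\rangle^{1-\alpha}$ is bounded on $H^1\oplus L^2$ for $\alpha\ge1$, this gives $\|\langle x\rangle^{-\alpha}e^{-itG_0}\|_{\CL(H^1\oplus L^2)}\lesssim1$. As $\Phi_{\mathrm{out}}$ is supported where $\langle x\rangle^{-\alpha}$ and its derivatives are $O(|t|^{-\alpha})$, one has $\|\Phi_{\mathrm{out}}\|_{H^1\oplus L^2}\lesssim|t|^{-\alpha}\|\psi\|_{H^1\oplus L^2}$, hence $\|\langle x\rangle^{-\alpha}e^{-itG_0}\Phi_{\mathrm{out}}\|_{H^1\oplus L^2}\lesssim|t|^{-\alpha}\|\psi\|_{H^1\oplus L^2}$.

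For the inner piece I would invoke the strong Huygens principle: $\Phi_{\mathrm{in}}$ is supported in the ball $\overline{B(0,|t|/2)}$ and $d\ge3$ is odd, so $e^{-itG_0}\Phi_{\mathrm{in}}$ is supported in the annulus $\{\,|t|/2\le|x|\le3|t|/2\,\}$, on which $\langle x\rangle^{-\alpha}$ and its derivatives are $O(|t|^{-\alpha})$. Therefore $\|\langle x\rangle^{-\alpha}e^{-itG_0}\Phi_{\mathrm{in}}\|_{H^1\oplus L^2}\lesssim|t|^{-\alpha}\|e^{-itG_0}\Phi_{\mathrm{in}}\|_{H^1\oplus L^2}$, and the unweighted evolution is bounded using \eqref{b4} as $\|e^{-itG_0}\Phi_{\mathrm{in}}\|_{H^1\oplus L^2}=\|e^{-itG_0}\langle x\rangle^{-1}(\langle x\rangle\Phi_{\mathrm{in}})\|_{H^1\oplus L^2}\lesssim\|\langle x\rangle\Phi_{\mathrm{in}}\|_{H^1\oplus L^2}$; since $\langle x\rangle\Phi_{\mathrm{in}}=\langle x\rangle^{1-\alpha}\chi(2|x|/|t|)\psi$ and multiplication by $\langle x\rangle^{1-\alpha}\chi(2|x|/|t|)$ is bounded on $H^1\oplus L^2$ uniformly for $|t|\ge1$ (again because $\alpha\ge1$), this is $\lesssim\|\psi\|_{H^1\oplus L^2}$. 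Adding the two contributions yields the lemma.

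I expect the inner piece to be the delicate point, and it is where oddness of $d$ is essential: the free evolution only grows like $\langle t\rangle$ on $H^1\oplus L^2$ (the low-frequency part of $\frac{\sin t\sqrt{P_0}}{\sqrt{P_0}}$), so one cannot estimate $e^{-itG_0}\Phi_{\mathrm{in}}$ and then insert the weight — the whole $|t|^{-\alpha}$ gain must come from the left-hand weight, which is legitimate only because strong Huygens confines $e^{-itG_0}\Phi_{\mathrm{in}}$ to $|x|\sim|t|$. The cutoff scale $|x|\sim|t|$ is the natural one: for $|x|\gg|t|$ the gain instead comes from the right-hand weight, while for $|x|\ll|t|$ strong Huygens pushes the solution out to radius $|t|$. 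The remaining ingredient, the left-hand version of \eqref{b4}, needs no new idea beyond Lemma \ref{b2}.
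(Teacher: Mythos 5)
Your proof is correct and follows essentially the same strategy as the paper's: a cutoff at the scale $\vert x \vert \sim \vert t \vert$, the strong Huygens principle to handle the piece localized in $\{ \vert x \vert \lesssim \vert t \vert \}$, and the uniform weighted bound \eqref{b4} (together with its left-hand counterpart, which the paper also uses implicitly) plus $\alpha \geq 1$ for the rest. The only difference is cosmetic — you split the initial data into two pieces and track the support of the evolved inner piece, whereas the paper inserts cutoffs on both sides of $e^{-itG_0}$ and observes that the inner-inner term vanishes identically.
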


\begin{proof}
By \eqref{b3}, we can suppose $t \geq 1$. Let $\varphi \in C_0^{\infty} ( ] - \infty , \frac{1}{2} [ )$ be such that $\varphi = 1$ close to zero. In particular, this implies
\begin{equation} \label{b7}
\Big\Vert \varphi \Big( \frac{\vert x \vert}{t} \Big) \Big\Vert_{\CL ( H^1 \oplus L^2 )} \lesssim 1 ,
\end{equation}
and
\begin{equation*}
\Big\Vert \< x \>^{- \alpha} (1 - \varphi ) \Big( \frac{\vert x \vert}{t} \Big) \Big\Vert_{\CL ( H^1 \oplus L^2 )} \leq \Big\Vert \< x \>^{- \alpha} (1 - \varphi ) \Big( \frac{\vert x \vert}{t} \Big) \Big\Vert_{\CL ( L^2 )} + \Big\Vert \< x \>^{- \alpha} (1 - \varphi ) \Big( \frac{\vert x \vert}{t} \Big) \Big\Vert_{\CL ( H^1 )} .
\end{equation*}
We obviously have
\begin{equation*}
\Big\Vert \< x \>^{- \alpha} (1 - \varphi ) \Big( \frac{\vert x \vert}{t} \Big) \Big\Vert_{\CL ( L^2 )} \lesssim \< t \>^{- \alpha} .
\end{equation*}
On the other hand,
\begin{align*}
\Big\Vert \< x \>^{- \alpha} ( 1 - \varphi ) \Big( \frac{\vert x \vert}{t} \Big) u \Big\Vert_{H^1} \lesssim{}& \Big\Vert \< x \>^{- \alpha} ( 1 - \varphi ) \Big( \frac{\vert x \vert}{t} \Big) u \Big\Vert_{L^2} + \Big\Vert \< x \>^{- \alpha - 1} ( 1 - \varphi ) \Big( \frac{\vert x \vert}{t} \Big) u \Big\Vert_{L^2}   \\
&+ \Big\Vert \< x \>^{- \alpha} \varphi^{\prime} \Big( \frac{\vert x \vert}{t} \Big) \frac{x}{t \vert x \vert} u \Big\Vert_{L^2} + \Big\Vert \< x \>^{- \alpha} ( 1 - \varphi ) \Big( \frac{\vert x \vert}{t} \Big) \nabla u \Big\Vert_{L^2}    \\
\lesssim{}& \< t \>^{- \alpha} \Vert u \Vert_{H^1} .
\end{align*}
Combining the three previous estimates, it yields
\begin{equation} \label{b8}
\Big\Vert \< x \>^{- \alpha} (1 - \varphi ) \Big( \frac{\vert x \vert}{t} \Big) \Big\Vert_{\CL ( H^1 \oplus L^2 )} \lesssim \< t \>^{- \alpha} .
\end{equation}

Using the previous estimates, we can now finish the proof of the lemma. We write
\begin{align}
\big\Vert \< x \>^{- \alpha} e^{- i t G_{0}} \< x \>^{- \alpha} \big\Vert_{\CL ( H^1 \oplus L^2 )} \leq{}& \Big\Vert \< x \>^{- \alpha} ( 1 - \varphi ) \Big( \frac{\vert x \vert}{t} \Big) e^{- i t G_{0}} \< x \>^{- \alpha} \Big\Vert_{\CL ( H^1 \oplus L^2)}    \nonumber \\ 
&+ \Big\Vert \< x \>^{- \alpha} \varphi \Big( \frac{\vert x \vert}{t} \Big) e^{- i t G_{0}} ( 1 - \varphi ) \Big ( \frac{\vert x \vert}{t} \Big) \< x \>^{- \alpha} \Big\Vert_{\CL ( H^1 \oplus L^2)}   \nonumber \\
&+ \Big\Vert \< x \>^{- \alpha} \varphi \Big( \frac{\vert x \vert}{t} \Big) e^{- i t G_{0}} \varphi \Big( \frac{\vert x \vert}{t} \Big) \< x \>^{- \alpha} \Big\Vert_{\CL ( H^1 \oplus L^2 )}     \nonumber \\
=&: I_1 + I_2 + I_3 .  \label{b9}
\end{align}
Using \eqref{b4}, \eqref{b8} and $\alpha \geq 1$, we get
\begin{equation} \label{b10}
I_1 \lesssim \Big\Vert \< x \>^{- \alpha} ( 1 - \varphi ) \Big( \frac{\vert x \vert}{t} \Big) \Big\Vert_{\CL ( H^1 \oplus L^2)} \Big\Vert e^{- i t G_{0}} \< x \>^{- \alpha} \Big\Vert_{\CL ( H^1 \oplus L^2)} \lesssim \< t \>^{- \alpha} .
\end{equation}
The same way, \eqref{b4}, \eqref{b7}, \eqref{b8} and $\alpha \geq 1$ imply
\begin{align}
I_2 &\lesssim \Big\Vert \varphi \Big( \frac{\vert x \vert}{t} \Big) \Big\Vert_{\CL ( H^1 \oplus L^2)} \Big\Vert \< x \>^{- \alpha} e^{- i t G_{0}} \Big\Vert_{\CL ( H^1 \oplus L^2)} \Big\Vert ( 1 - \varphi ) \Big ( \frac{\vert x \vert}{t} \Big) \< x \>^{- \alpha} \Big\Vert_{\CL ( H^1 \oplus L^2)}    \nonumber \\
&\lesssim \< t \>^{- \alpha} .   \label{b11}
\end{align}
Eventually, the strong Huygens principle and the assumptions on the support of $\varphi$ give
\begin{equation} \label{b12}
I_{3} = 0 .
\end{equation}
Thus, the lemma follows from \eqref{b9} and the estimates \eqref{b10}, \eqref{b11} and \eqref{b12}.
\end{proof}

\begin{proof}[Proof of Proposition \ref{b1}]
We have, for $\im z >0$,
\begin{equation*}
( G_{0} - z )^{-1} = i \int_0^{+ \infty} e^{- i t (G_0-z)} d t ,
\end{equation*}
and thus
\begin{equation*}
( G_{0} - z )^{-k} = i \int_0^{+ \infty} \frac{(i t)^{k-1}}{( k-1 ) !} e^{- i t (G_0-z)} d t .
\end{equation*}
We then estimate, for $\im z > 0$,
\begin{align*}
\big\Vert \< x \>^{- k - \varepsilon} ( G_{0} - z )^{-k} \< x \>^{- k - \varepsilon} \big\Vert_{{\CL}({\CH}^0)} &\leq \int_0^{+ \infty} \vert t \vert^{k-1} \big\Vert \< x \>^{- k - \varepsilon} e^{- i t G_0} \< x \>^{- k - \varepsilon} \big\Vert_{{\CL}({\CH}^0)} d t    \\
&\lesssim \int_0^{+ \infty} \< t \>^{- 1 - \varepsilon} d t \lesssim 1 ,
\end{align*}
where we have used Lemma \ref{b6}. To obtain the higher order estimates, we observe that
\begin{equation*}
\Vert v \Vert_{\CH^{\beta}} \asymp \big\Vert (G_0+i)^{\beta} v \big\Vert_{\CH^{0}} ,
\end{equation*}
and that
\begin{equation*}
(G_0+i)^{\beta} \< x \>^{-k-\varepsilon} = \CO (1) \<x\>^{-k-\varepsilon} (G_0+i)^{\beta} .
\end{equation*}
The proof for $\im z < 0$ is analogous.
\end{proof}

\section{Improved estimates for the free resolvent in dimension 3}  \label{s3}

In this section, we show improved resolvent estimates in dimension $3$ using the explicit form of the kernel.

\begin{proposition}\sl \label{b13}
Let $d = 3$, $k \in \N^{*}$, $s \in \R$ and
\begin{equation*}
\kappa =
\left\{ \begin{aligned}
&1 &&\text{for } k = 1 ,   \\
&k - 1/2 &&\text{for } k \geq 2 .
\end{aligned} \right.
\end{equation*}
Then, for all $C , \varepsilon > 0$, we have
\begin{equation*}
\sup_{z \in \C \setminus \R , \ \vert z \vert \leq C} \big\Vert \< x \>^{- \kappa - \varepsilon} ( G_{0} - z )^{-k} \< x \>^{- \kappa - \varepsilon} \big\Vert_{\CL ( \CH^s , \CH^{s+k} )} \lesssim 1 .
\end{equation*}
\end{proposition}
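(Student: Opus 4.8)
The plan is to exploit the fact that in dimension $d=3$ the kernel of $(P_0 - \lambda^2)^{-1}$ is fully explicit, namely $\frac{e^{i\lambda|x-y|}}{4\pi|x-y|}$ for $\im\lambda > 0$, so that the resolvent of $G_0$ can be written down in closed form using the matrix representation analogous to \eqref{b5}. Writing $z$ in place of $\lambda$ on the appropriate sheet, the entries of $(G_0 - z)^{-1}$ are, up to harmless bounded factors, built from $(P_0 - z^2)^{-1}$ and $z(P_0-z^2)^{-1}$, whose kernels are $\frac{e^{iz|x-y|}}{4\pi|x-y|}$ and $\frac{z\,e^{iz|x-y|}}{4\pi|x-y|}$. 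Taking the $k$-th power, $(G_0-z)^{-k}$ is a matrix whose entries are linear combinations of $(P_0-z^2)^{-j} z^{\ell}$ with $j \le k$; and the kernel of $(P_0 - z^2)^{-j}$ is obtained by differentiating $\frac{e^{iz|x-y|}}{4\pi|x-y|}$ in the spectral parameter, giving kernels of the form $p(|x-y|, z)\, e^{iz|x-y|}$ where $p$ is a polynomial in $|x-y|$ of degree $\le j-1$ (divided by $|x-y|$) with coefficients polynomial in $z$. Crucially, since $\im z \ge 0$ on the relevant half-plane and $|z|\le C$, the exponential $|e^{iz|x-y|}| \le 1$, so one reduces to estimating integral operators with kernels bounded by $\< x\>^{-\kappa-\varepsilon}\big(|x-y|^{j-2} + \dots + |x-y|^{-1}\big)\<y\>^{-\kappa-\varepsilon}$ times a power of $|z| \le C$.

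The key steps, in order, are as follows. First, derive the explicit kernel formula for $(G_0-z)^{-k}$ by combining the $2\times2$ block structure of $G_0$ with the Bessel-type kernel of $(P_0-z^2)^{-1}$ in $d=3$, and read off that each matrix entry is an integral operator $T_{j,\ell}$ with kernel dominated (in absolute value, using $\im z \ge 0$) by $C^{|\ell|}\,\frac{|x-y|^{j-1}}{|x-y|}\,\mathbf 1 + (\text{lower powers of }|x-y|)$. Second, reduce the $\CL(\CH^s,\CH^{s+k})$ bound to an $\CL(\CH^0,\CH^k)$ bound exactly as in the proof of Proposition \ref{b1} (commuting $(G_0+i)^{\beta}$ through the weights $\<x\>^{-\kappa-\varepsilon}$ up to bounded errors), and then further to the $L^2\to L^2$ boundedness of the weighted kernel operators, since the loss of $k$ derivatives is built into the fact that we only gain $j\le k$ powers of the resolvent. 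Third, prove the Schur-type bound: the operator with kernel $\<x\>^{-\kappa-\varepsilon}|x-y|^{m-1}\<y\>^{-\kappa-\varepsilon}$ (with $0\le m \le k-1$, so $m-1$ ranges in $[-1,k-2]$) is bounded on $L^2$ provided $\kappa > m - 1/2$; splitting into the near-diagonal region $|x-y|\lesssim 1$, where the singularity $|x-y|^{-1}$ is locally $L^1$ in $d=3$ and Young's inequality applies, and the far region $|x-y|\gtrsim 1$, where $|x-y|^{m-1}\lesssim \<x\>^{m-1}+\<y\>^{m-1}$ reduces matters to the boundedness of $\<x\>^{-\kappa-\varepsilon}\<y\>^{m-1-\kappa-\varepsilon}$ composed appropriately, which holds iff the remaining weights are in $L^2$, i.e. $\kappa + \varepsilon - (m-1) > 3/2$, matching $\kappa = \max(1,k-1/2)$ with room to spare for small $\varepsilon$. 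The special value $\kappa=1$ at $k=1$ is exactly the Hardy-type endpoint already handled in Lemma \ref{b2}; for $k\ge2$ the dominant term has $m-1 = k-2$, forcing $\kappa > k - 3/2$, and one takes $\kappa = k-1/2$ to absorb the $\varepsilon$.

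The main obstacle I anticipate is bookkeeping the precise powers of $|x-y|$ and of $z$ that appear when differentiating $\frac{e^{iz|x-y|}}{4\pi|x-y|}$ repeatedly in the spectral variable, and checking that the worst term genuinely carries $|x-y|^{k-2}$ (not $|x-y|^{k-1}$), so that the threshold $\kappa = k - 1/2$ — rather than something weaker — is what the argument actually delivers; a naive count would suggest a worse weight. One must also be careful that the polynomial-in-$z$ coefficients do not conspire with the $|x-y|$ powers to worsen the far-field estimate, but since $|z|\le C$ these are uniformly bounded and cause no harm. A secondary, more routine point is to confirm that near the diagonal the $|x-y|^{-1}$ singularity — the only genuine singularity, inherited from the Newtonian potential in $d=3$ — is integrable and that all higher powers $|x-y|^{m-1}$ with $m\ge1$ are locally bounded, so the near-diagonal contribution is always harmless and the whole difficulty is concentrated at infinity, where it is controlled by the weights.
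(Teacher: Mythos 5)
Your overall strategy coincides with the paper's: use the explicit kernel $\frac{e^{iz\vert x-y\vert}}{4\pi\vert x-y\vert}$ of $(P_0-z^2)^{-1}$ in $d=3$, reduce $(G_0-z)^{-k}$ to weighted integral operators with kernels $z^{\beta}\vert x-y\vert^{\gamma-1}e^{iz\vert x-y\vert}$, $0\le\gamma\le k-1$, check $L^2$-boundedness of the worst term $\gamma=k-1$ under the weight $\<x\>^{-k+1/2-\varepsilon}$, lift to $\CL(\CH^s,\CH^{s+k})$ by the same commutation argument as in Proposition \ref{b1}, and defer $k=1$ to Proposition \ref{b1}. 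The one step that would fail if executed literally as you describe it is the decomposition of $(G_0-z)^{-k}$ into terms $(P_0-z^2)^{-j}z^{\ell}$ with kernels whose coefficients are ``polynomial in $z$''. That is false for the individual factors: in $d=3$ the kernel of $(P_0-z^2)^{-2}$ is $\frac{i}{8\pi z}\,e^{iz\vert x-y\vert}$ (differentiating in the spectral parameter $w=z^{2}$ costs $\frac{1}{2z}\partial_z$), so $(P_0-z^2)^{-j}$ for $j\ge 2$ blows up like $\vert z\vert^{\,1-j}$ as $z\to 0$, and a term-by-term estimate of your decomposition is not uniform near the threshold. The negative powers of $z$ do cancel in the full sum, and the clean way to see this --- the paper's device --- is to write $(G_0-z)^{-k}=\frac{1}{(k-1)!}\partial_z^{k-1}(G_0-z)^{-1}$ and differentiate the explicit kernel of $(G_0-z)^{-1}$ directly in $z$, which produces only nonnegative powers $z^{\beta}$, $0\le\beta\le 2$, together with the powers $\vert x-y\vert^{\gamma-1}$, $\gamma\le k-1$, you correctly identified as the worst case. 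You should incorporate this (or an equivalent cancellation argument) rather than treating each $(P_0-z^2)^{-j}$ separately.

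Two smaller points. Your stated threshold ``$\kappa>m-1/2$'' is off by one from the inequality you actually derive: the far-field Hilbert--Schmidt condition is $\kappa+\varepsilon-(m-1)>3/2$, i.e.\ $\kappa+\varepsilon>m+1/2$, which at $m=k-1$ is exactly what forces $\kappa=k-1/2$ (there is no ``room to spare'' beyond the $\varepsilon$). Also, your near/far splitting (Schur near the diagonal, Hilbert--Schmidt at infinity) replaces the paper's Lemma \ref{b14}; it is sufficient here because for $k\ge 2$ one has $\kappa\ge 3/2$, so the crude bound $\vert x-y\vert^{-1}\le 1$ on $\{\vert x-y\vert\ge 1\}$ still leaves square-integrable weights, whereas Lemma \ref{b14} would be needed to reach $\kappa=1$.
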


In order to prove this proposition, we will need the following lemma valid in all dimensions.

\begin{lemma}\sl \label{b14}
Let $0 < \alpha , \gamma < \frac{d}{2}$, $0 < \beta < d$ be such that $\frac{d}{2} < \alpha + \beta < d$ and $\alpha + \beta + \gamma > d$. Then the operator with integral kernel $k ( x , y ) = \< x \>^{- \alpha} \vert x - y \vert^{- \beta} \< y \>^{- \gamma}$ is a bounded operator on $L^2 ( \R^d )$.
\end{lemma}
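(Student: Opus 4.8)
The plan is to prove the $L^2$-boundedness of the integral operator $T$ with kernel $k(x,y) = \<x\>^{-\alpha} |x-y|^{-\beta} \<y\>^{-\gamma}$ by a Schur test with carefully chosen weights, after first splitting the kernel into a near-diagonal part and an off-diagonal part. The singularity of $|x-y|^{-\beta}$ at $x=y$ is locally integrable precisely because $\beta < d$, so the near-diagonal part $\mathbf{1}_{|x-y|\le 1} |x-y|^{-\beta}$ defines a bounded operator on $L^2$ by Young's inequality (its $L^1_z$ norm in $z=x-y$ is finite), and the extra bounded factors $\<x\>^{-\alpha}$, $\<y\>^{-\gamma}$ only help. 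So the real content is the long-range part $k_\infty(x,y) = \<x\>^{-\alpha} |x-y|^{-\beta} \<y\>^{-\gamma} \mathbf{1}_{|x-y|\ge 1}$.

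For the off-diagonal part I would run Schur's test with the weight $w(x) = \<x\>^{-\sigma}$ for a suitable $\sigma$ to be optimized, i.e. show that
\begin{equation*}
\sup_x \int \<x\>^{-\alpha} |x-y|^{-\beta} \<y\>^{-\gamma} \frac{\<y\>^{-\sigma}}{\<x\>^{-\sigma}} \, dy \lesssim 1
\end{equation*}
and the symmetric bound with $x$ and $y$ swapped. The basic tool is the standard convolution-type estimate: for exponents $a, b > 0$ with $a + b > d$ one has $\int \<x-y\>^{-a} \<y\>^{-b} \, dy \lesssim \<x\>^{-\min(a,b,a+b-d)}$ (with logarithmic corrections in the borderline cases, which we avoid since our inequalities are strict). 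One splits the region of integration into $|y| \le |x|/2$, $|x-y| \le |x|/2$, and the remaining ``both large'' region, and estimates each piece; on $|x-y|\ge 1$ we may freely replace $|x-y|^{-\beta}$ by $\<x-y\>^{-\beta}$. Choosing $\sigma$ so that the three competing decay rates coming from these regions all beat the growth $\<x\>^{\sigma}$ we divide by, is a finite linear-programming problem in $\sigma$; the hypotheses $\alpha, \gamma < d/2$, $d/2 < \alpha+\beta < d$ and $\alpha+\beta+\gamma > d$ are exactly what make a valid choice of $\sigma$ exist. Concretely, $\sigma$ must satisfy roughly $\sigma < \gamma$, $\sigma < \alpha + \beta - d/2$ (coming from the $|x-y|$-small region where the $\<y\>^{-\gamma-\sigma}$ factor is integrated against $|x-y|^{-\beta}$ near $y \approx x$, needing $\beta + \gamma + \sigma > d$ is not quite it — rather the relevant constraint is $\alpha+\beta+\gamma>d$ together with $\alpha+\beta>d/2$), and symmetrically for the transposed estimate; one checks the feasible interval for $\sigma$ is nonempty.

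The main obstacle, and the step deserving the most care, is bookkeeping the exponent arithmetic in the ``both $|y|$ and $|x-y|$ comparable to $|x|$'' regime and in the transposed Schur integral, to confirm that a single $\sigma$ works simultaneously for both $\sup_x \int \cdots dy$ and $\sup_y \int \cdots dx$. The dangerous borderline is when $\alpha + \beta$ is close to $d/2$ or to $d$; the strict inequalities in the hypotheses guarantee we stay away from the logarithmically divergent cases, so no endpoint/interpolation trick is needed. An alternative to Schur, which may be cleaner to write up, is to factor $T = \<x\>^{-\alpha} \, I_\beta \, \<y\>^{-\gamma}$ where $I_\beta$ is (a truncation of) the Riesz potential of order $d-\beta$, use the Hardy--Littlewood--Sobolev inequality $\|I_\beta f\|_{L^q} \lesssim \|f\|_{L^p}$ with $1/q = 1/p - (d-\beta)/d$, and dispose of the weights by Hölder; then the constraints $\alpha, \gamma < d/2$ and the sum conditions translate into the requirement that the Hölder exponents lie in the admissible HLS range. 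I would present the Schur-test proof as the primary argument since it is self-contained and makes the role of each hypothesis transparent.
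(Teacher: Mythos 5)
Your primary argument (a weighted Schur test after splitting off the near‑diagonal part) is correct but genuinely different from the paper's proof, which instead runs H\"older's inequality, then Young's convolution inequality applied to the kernel $\vert x \vert^{-\beta} \< x \>^{-\varepsilon}$ (the $\varepsilon$ being borrowed from the weights so that this kernel lies in an honest $L^{q_1}$), then H\"older again --- essentially the ``alternative'' HLS‑flavoured route you sketch at the end. Your Schur version does go through: the cut‑off piece $\mathbf{1}_{\vert x - y \vert \leq 1} \vert x - y \vert^{-\beta}$ is an $L^1$ convolution kernel since $\beta < d$, and for the far part, testing against $w ( x ) = \< x \>^{- \sigma}$ and using $\int \< x - y \>^{-a} \< y \>^{-b} \, d y \lesssim \< x \>^{- \min ( a , b , a + b - d )}$ for $a + b > d$, the two Schur conditions reduce to $d - \beta - \gamma < \sigma$, $d - \alpha - \beta < \sigma$, $\sigma \leq \alpha + \beta$ and $\sigma \leq \beta + \gamma$; this window is nonempty precisely because $\alpha + \beta > d / 2$, $\beta + \gamma > d - \alpha > d / 2$ (using $\alpha < d/2$) and $\alpha + 2 \beta + \gamma > d$, and choosing $\sigma$ in the open interior also absorbs the borderline logarithms. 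So your instinct that feasibility is a finite linear‑programming problem settled by the hypotheses is right, although the sample constraints you wrote down ($\sigma < \gamma$, $\sigma < \alpha + \beta - d/2$) are not the correct ones --- that bookkeeping is the one step you must actually carry out before this counts as a proof. As for what each approach buys: the paper's is shorter once one quotes Young's inequality; yours is more self‑contained, makes transparent where each hypothesis enters, and in fact never uses $\gamma < d/2$ or $\alpha + \beta < d$, so it establishes a slightly stronger statement.
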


\begin{proof}
Let $u \in L^2 ( \R^d )$ and 
\begin{equation*}
v ( x ) = \int \< x \>^{- \alpha} \vert x - y \vert^{- \beta} \< y \>^{- \gamma} u ( y ) \, d y .
\end{equation*}
Then, by the H\"older inequality, we have
\begin{equation*}
\Vert v \Vert \lesssim \big\Vert \< x \>^{- \alpha + \varepsilon} \big\Vert_{p_1} \Big\Vert \int \vert x - y \vert^{- \beta} \< x - y \>^{- \varepsilon} \< y \>^{- \gamma + \varepsilon} \vert u \vert ( y ) \, d y \Big\Vert_{p_2} ,
\end{equation*}
with $\frac{1}{p_1} + \frac{1}{p_2} = \frac{1}{2}$. For $0 < \alpha < \frac{d}{2}$, we can take $p_1 = \frac{d + \varepsilon}{\alpha - \varepsilon}$ and $p_2 = \frac{2 d + 2 \varepsilon}{d - 2 \alpha + 3 \varepsilon}$ with $\varepsilon > 0$ small enough. Now, by \cite[Corollary 4.5.2]{Ho83_01}, we have
\begin{equation*}
\big\Vert \vert x \vert^{- \beta} \< x \>^{- \varepsilon} * \< y \>^{- \gamma + \varepsilon} \vert u \vert ( y ) \big\Vert_{p_2} \lesssim \big\Vert \vert x \vert^{- \beta} \< x \>^{- \varepsilon} \big\Vert_{q_1} \big\Vert \< y \>^{- \gamma + \varepsilon} \vert u \vert \big\Vert_{q_2} ,
\end{equation*}
with $1 \leq q_{1} , q_{2} \leq + \infty$ and
\begin{equation*}
\frac{1}{q_1} + \frac{1}{q_2} = 2 - 1 + \frac{1}{p_2} .
\end{equation*}
As $0 < \beta < d$ and $\alpha + \beta > d / 2$, we can take $q_1 = \frac{d + \varepsilon}{\beta + \varepsilon}$ and $q_2 = \frac{2 d + 2 \varepsilon}{3 d - 2 ( \alpha + \beta ) + 3 \varepsilon}$. We now estimate again by the H\"older inequality
\begin{equation*}
\big\Vert \< y \>^{- \gamma + \varepsilon} \vert u \vert \big\Vert_{q_2} \lesssim \big\Vert \< y \>^{- \gamma + \varepsilon} \big\Vert_{r_1} \Vert u \Vert_{r_2} ,
\end{equation*}
with
\begin{equation*}
\frac{1}{r_1} + \frac{1}{r_2} = \frac{1}{q_2} .
\end{equation*}
As $\alpha + \beta < d$, we can take $r_2 = 2$ and $\frac{1}{r_1} = 1 - \frac{\alpha + \beta}{d + \varepsilon}$. We need $( \gamma - \varepsilon ) r_1 > d$ or equivalently
\begin{equation*}
\gamma - \varepsilon > \frac{d}{r_1} = d - \frac{d ( \alpha + \beta )}{d + \varepsilon} \Longleftrightarrow d ( \alpha + \beta + \gamma ) > d^2 + 2 \varepsilon d - \varepsilon \gamma + \varepsilon^{2} ,
\end{equation*}
which is fulfilled for $\varepsilon > 0$ small enough since $\alpha + \beta + \gamma > d$.
\end{proof}

\begin{proof}[Proof of Proposition \ref{b13}]
Using Proposition \ref{b1}, it is sufficient to consider the case $k \geq 2$. Let us first recall that the kernel of the free resolvent in dimension $3$ is given by
\begin{equation*}
( P_0 - z^2 )^{-1} \delta = \frac{e^{i z \vert x\vert}}{4 \pi \vert x \vert} .
\end{equation*}
Using \eqref{c27}, we write
\begin{align*}
( G_{0} - z )^{-1} &= ( P_0 - z^2 )^{-1}
\left( \begin{array}{cc}
z & i 
\\ - i P_0 & z
\end{array} \right)  \\
&=
\left( \begin{array}{cc}
0 & 0 \\
- i & 0
\end{array} \right)
+ ( P_0 - z^2 )^{-1}
\left( \begin{array}{cc}
z & i \\
- i z^2 & z
\end{array} \right) .
\end{align*}
The kernel of the second operator in the above line is given by 
\begin{equation*}
\left(\begin{array}{cc}
z & i \\ - i z^2 & z
\end{array} \right)
\frac{1}{4 \pi \vert x - y \vert}e^{i z \vert x - y \vert} .
\end{equation*}
Note also that, for $k \geq 2$,
\begin{equation*}
( G_{0} - z )^{-k} = \frac{1}{( k - 1 ) !} \partial_z^{k - 1} ( G_0 - z )^{-1} .
\end{equation*}
Thus, the kernel of this operator decomposes into a sum of terms of the form
\begin{equation} \label{b15}
z^{\beta} \vert x - y \vert^{\gamma - 1} e^{i z \vert x - y \vert} ,
\end{equation}
with $0 \leq \beta \leq 2$, $0 \leq \gamma \leq k - 1$. We therefore have to bound kernels of the form
\begin{equation*}
\< x \>^{- \kappa - \varepsilon} \vert x - y \vert^{\gamma - 1} e^{i z \vert x - y \vert} \< y \>^{- \kappa - \varepsilon} .
\end{equation*}
If $\gamma - 1 = - 1$, then Lemma \ref{b14} tells us that the corresponding operator is bounded on $L^2$ for $\kappa \geq 1$. If $\gamma - 1 \geq 0$, then for $\kappa = \gamma - 1 + 3 / 2$ the above kernel can be estimated by 
\begin{equation*}
\< x \>^{- 3 / 2 - \varepsilon} \< y \>^{- 3 / 2 - \varepsilon} ,
\end{equation*}
which clearly defines a bounded operator on $L^2$. The worst case is $\gamma = k - 1$ and thus $\kappa = k - 1 / 2$. This proves the estimate in $\CL ( \CH^0 )$ and then in $\CL ( \CH^s , \CH^{s+k} )$ by the same argument as in the end of the proof of Proposition \ref{b1}. 
\end{proof}

\section{Resolvent estimates for the perturbed operator} \label{s4}

Using the results obtained in the previous sections, we now prove the estimates for the weighted resolvent of $G$ stated in Theorem \ref{b16}. To lighten the exposition, we will use the notations $R(z)=(G-z)^{-1}$ and $R_0(z)=(G_0-z)^{-1}$ in the sequel. Let $\widetilde{\partial}_j = \partial_j b$ and $\widetilde{\partial}_j^{*} = b \partial_j$. In the following, $r_j$ will stand for an error term fulfilling
\begin{equation} \label{c3}
\partial^{\alpha}_x r_j ( x ) = \CO \big( \< x \>^{- \vert \alpha \vert - \rho - j} \big) .
\end{equation} 
Let us now introduce 
\begin{equation} \label{c11}
V : = G_{0} - G = i \left( \begin{array}{cc} 0 & 0 \\
P - P_0 & 0\end{array}\right) : {\mathcal H}^s \longrightarrow {\mathcal H}^{s - 1} ,
\end{equation}
which is continuous. Note that 
\begin{equation} \label{c14}
P - P_0 = \widetilde{\partial}^* r_0 \widetilde{\partial} + \widetilde{\partial}^* r_1 + r_1 \widetilde{\partial} + r_2 ,
\end{equation}
where we have not written the sum over the indexes on the right hand side. In dimension $3$, we will need the following lemma. Note that this result also applies to $G$ replaced by $G_{0}$.

\begin{lemma}\sl \label{c2}
For all $s \in \R$ and $C , \varepsilon > 0$, we have
\begin{gather}
\bigg\Vert \< x \>^{- 1 / 2 - \varepsilon} \left( \begin{array}{cc} 0 & 0 \\ \widetilde{\partial}_j & 0 \end{array} \right) R ( z ) \< x \>^{- 1 / 2 - \varepsilon} \bigg\Vert_{\CL ( \CH^s , \CH^{s + 1} )} \lesssim 1 ,  \label{c4}  \\
\bigg\Vert \< x \>^{- 1 / 2 - \varepsilon} R ( z ) \left( \begin{array}{cc} 0 & 0 \\ \widetilde{\partial}^*_j & 0 \end{array} \right) \< x \>^{- 1 / 2 - \varepsilon} \bigg\Vert_{\CL ( \CH^s , \CH^{s + 1} )} \lesssim 1 ,   \label{c5}
\end{gather}
uniformly in $z \in \C \setminus \R$, $\vert z \vert \leq C$.
\end{lemma}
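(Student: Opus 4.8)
\textit{The plan} is to reduce the two matrix estimates to uniform weighted bounds on the scalar resolvent $(P-z^{2})^{-1}$, settle the free operator $G_{0}$ first, and then pass to $G$. First I would reduce to $s=0$ exactly as at the end of the proof of Proposition~\ref{b1} (conjugating by $(G+i)^{s}$ and commuting it through $\langle x\rangle^{-1/2-\varepsilon}$, which only creates error terms with the same decay). Using $R(z)=(P-z^{2})^{-1}\left(\begin{array}{cc} z & i\\ -iP & z\end{array}\right)$, a direct computation gives
\[
\left(\begin{array}{cc} 0 & 0\\ \widetilde{\partial}_{j} & 0\end{array}\right)R(z)=\left(\begin{array}{cc} 0 & 0\\ z\,\widetilde{\partial}_{j}(P-z^{2})^{-1} & i\,\widetilde{\partial}_{j}(P-z^{2})^{-1}\end{array}\right),
\]
so, as $|z|\le C$, estimate \eqref{c4} reduces to proving that $\langle x\rangle^{-1/2-\varepsilon}\,\widetilde{\partial}_{j}(P-z^{2})^{-1}\,\langle x\rangle^{-1/2-\varepsilon}$ is bounded from $L^{2}$ to $L^{2}$, from $L^{2}$ to $H^{1}$, and from $H^{1}$ to $H^{1}$, uniformly for $z\in\C\setminus\R$, $|z|\le C$. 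Estimate \eqref{c5} is the adjoint statement and follows in the same way, with $z$ replaced by $\overline{z}$ and $\widetilde{\partial}^{*}_{j}=b\partial_{j}$.

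\textit{The key point} is that differentiation must upgrade the natural weight $\langle x\rangle^{-1-\varepsilon}$ (the one needed for the undifferentiated resolvent, cf.\ Theorem~\ref{b16} with $k=1$) to $\langle x\rangle^{-1/2-\varepsilon}$. For $G_{0}$, where $\widetilde{\partial}_{j}=\partial_{j}$, I would use the factorisation
\[
\partial_{j}(P_{0}-z^{2})^{-1}=R_{j}\,(-\Delta)^{1/2}(P_{0}-z^{2})^{-1},\qquad R_{j}:=\partial_{j}(-\Delta)^{-1/2},
\]
where $R_{j}$ is the Riesz transform: it is bounded on $L^{2}$ and its Calder\'on--Zygmund kernel commutes with $\langle x\rangle^{-1/2-\varepsilon}$ up to an operator whose kernel is integrable at the diagonal and decays like $\langle x-y\rangle^{-2}$. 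The point of the factor $(-\Delta)^{1/2}$ is that it removes the infrared singularity: for $|z|$ bounded below, $\Vert\langle x\rangle^{-1/2-\varepsilon}(-\Delta)^{1/2}(P_{0}-z^{2})^{-1}\langle x\rangle^{-1/2-\varepsilon}\Vert\lesssim1$ is the classical limiting absorption principle, while at $z=0$ the operator is $\langle x\rangle^{-1/2-\varepsilon}(-\Delta)^{-1/2}\langle x\rangle^{-1/2-\varepsilon}$, whose kernel $\langle x\rangle^{-1/2-\varepsilon}|x-y|^{-2}\langle y\rangle^{-1/2-\varepsilon}$ is $L^{2}$-bounded by Lemma~\ref{b14} (with $\beta=2$); a continuity argument in $z$ gives the uniform bound down to $z=0$. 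The extra derivative for the $L^{2}\to H^{1}$ bound is absorbed in the same way after commuting $\langle D\rangle$ through, since $(-\Delta)^{-1/2}\langle D\rangle$ still has a kernel decaying like $\langle x-y\rangle^{-2}$. For the perturbed $P$ the argument is identical once $(-\Delta)^{1/2}$ is replaced by $P^{1/2}$: ellipticity \eqref{a3} yields $\sum_{j}\Vert\widetilde{\partial}_{j}u\Vert^{2}\le\delta^{-1}\Vert P^{1/2}u\Vert^{2}$, so $\widetilde{\partial}_{j}P^{-1/2}$ is bounded on $L^{2}$ and plays the role of $R_{j}$, and the bound on $\langle x\rangle^{-1/2-\varepsilon}P^{1/2}(P-z^{2})^{-1}\langle x\rangle^{-1/2-\varepsilon}$ follows from the limiting absorption principle for $P$ at positive energies and the low-energy resolvent estimate for $P$. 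Alternatively, one passes from $G_{0}$ to $G$ by the resolvent identity $R(z)=R_{0}(z)-R_{0}(z)VR(z)$, treating $\left(\begin{array}{cc} 0 & 0\\ \widetilde{\partial}_{j} & 0\end{array}\right)R_{0}(z)$ by the free case and absorbing $VR(z)$ by inserting $\langle x\rangle^{\pm(1+\varepsilon')}$ around $V$, using the structure \eqref{c14} and the decay $\rho>1$ of the $r_{j}$ to feed the weight $\langle x\rangle^{-1-\varepsilon'}$ needed by Theorem~\ref{b16}.

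\textit{The hard part} will be the uniformity as $z\to0$ with the weaker weight — i.e.\ making rigorous that the derivative genuinely buys half a power of $\langle x\rangle$. One must separate the part of $\widetilde{\partial}_{j}(P_{0}-z^{2})^{-1}$ whose kernel really gains the decay $|x-y|^{-2}$ (directly handled by Lemma~\ref{b14}) from the oscillatory remainder $\sim z\,e^{iz|x-y|}/|x-y|$, which gains no decay and is \emph{not} $L^{2}$-bounded with weight $1/2+\varepsilon$ if one only takes absolute values; this term has to be controlled using the factor $z$ together with $z^{2}(P_{0}-z^{2})^{-1}=-\mathrm{Id}+P_{0}(P_{0}-z^{2})^{-1}$, which trades the dangerous power of $z$ for a spectral localisation in which the weight-$(1/2+\varepsilon)$ limiting absorption principle applies. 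In the perturbed case there is the further subtlety of matching this weak outer weight with the weight $1+\varepsilon$ that the available bound on $(G-z)^{-1}$ requires, which is exactly where the hypothesis $\rho>1$ (for $d=3$) enters.
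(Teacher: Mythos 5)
Your algebraic reduction is correct and, in substance, identical to the paper's: by the partial--fraction identities $(P^{1/2}-z)^{-1}-(P^{1/2}+z)^{-1}=2z(P-z^{2})^{-1}$ and $(P^{1/2}-z)^{-1}+(P^{1/2}+z)^{-1}=2P^{1/2}(P-z^{2})^{-1}$, the operators $z\,\widetilde{\partial}_{j}(P-z^{2})^{-1}$ and $\widetilde{\partial}_{j}(P-z^{2})^{-1}$ you arrive at are exactly the entries $A,B$ that the paper obtains by diagonalising $G=U^{-1}LU$. The divergence is in what happens next: at this point the paper simply invokes \cite[Lemmas 4.1, 4.7 and 4.8]{BoHa10_01} and \cite[Theorem 1]{BoHa10_02} for the resulting scalar bounds, whereas you attempt to prove them, and your argument has genuine gaps at precisely the two hard points. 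For the free case, the uniform bound on $\langle x\rangle^{-1/2-\varepsilon}P_{0}^{1/2}(P_{0}-z^{2})^{-1}\langle x\rangle^{-1/2-\varepsilon}$ for $|z|\le C$ is not obtained by ``classical LAP away from $0$ plus a continuity argument down to $z=0$'': boundedness of the limit operator at $z=0$ together with bounds for each fixed $z\neq0$ does not give uniformity near the threshold, and that uniformity is the entire content of the statement. Your proposed repair via $z^{2}(P_{0}-z^{2})^{-1}=-\mathrm{Id}+P_{0}(P_{0}-z^{2})^{-1}$ fails: the dangerous kernel term $z\,e^{iz|x-y|}/|x-y|$ carries only one power of $z$, and in any case $\Vert z^{2}(P_{0}-z^{2})^{-1}\Vert_{\CL(L^{2})}=|z|^{2}/\dist(z^{2},[0,\infty))$ blows up as $z$ approaches the real axis, so $P_{0}(P_{0}-z^{2})^{-1}$ is not uniformly bounded either. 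What is actually needed is the $s$-dependent low-frequency estimate $\Vert\langle x\rangle^{-s}(-\Delta-\mu\mp i0)^{-1}\langle x\rangle^{-s}\Vert\lesssim\mu^{s-1}$ for $s\in(1/2,1)$ (equivalently, the uniform weight-$(1/2+\varepsilon)$ bound for $(|D|\mp z)^{-1}$ down to $z=0$), which must be proved, not inferred.

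For the perturbed operator the same gap reappears in sharper form. The bound $\Vert\langle x\rangle^{-1/2-\varepsilon}P^{1/2}(P-z^{2})^{-1}\langle x\rangle^{-1/2-\varepsilon}\Vert\lesssim1$ does not follow from ``the LAP at positive energies and the low-energy resolvent estimate for $P$'': the only low-energy estimate available here (\cite[Theorem 1]{BoHa10_02}, which yields \eqref{c13}) carries the weight $\langle x\rangle^{-1-\varepsilon}$, and the weight-$(1/2+\varepsilon)$ version with the extra $P^{1/2}$ is exactly the content of \cite[Lemmas 4.7--4.8]{BoHa10_01} --- the external input you would have to cite or re-prove. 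Moreover, the weighted boundedness of $\langle x\rangle^{-1/2-\varepsilon}\,\widetilde{\partial}_{j}P^{-1/2}\,\langle x\rangle^{1/2+\varepsilon}$ (a Riesz transform of a variable-coefficient divergence-form operator conjugated by the weight) is asserted only by analogy with the Euclidean case. Finally, your alternative route via $R=R_{0}+R_{0}VR$ does not close as written: \eqref{c13} needs the weight $\langle x\rangle^{-1-\varepsilon}$ on \emph{both} sides of $R(z)$, whereas after peeling off one $V$ only $\langle x\rangle^{-1/2-\varepsilon}$ remains on the outer side; one must iterate the identity twice, as in Lemma \ref{c1}, so that $R$ sits between two copies of $V$, and the outer free factors then require the very weight-$(1/2+\varepsilon)$ free estimates whose proof is incomplete above. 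In short, the skeleton of your argument matches the paper's, but the load-bearing scalar estimates are assumed rather than established.
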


\begin{proof}
We only show \eqref{c4}, the proof for \eqref{c5} being analogous. Let us first recall that 
\begin{gather}
G = U^{-1} L U \qquad \text{with} \qquad L = \left( \begin{array}{cc} P^{1/2} & 0 \\
0 & -P^{1/2} \end{array} \right) ,    \label{c6} \\
U = \frac{1}{\sqrt{2}} \left( \begin{array}{cc} P^{1/2} & i \\
P^{1/2} & -i \end{array}\right) , \qquad U^{-1} = \frac{1}{\sqrt{2}} \left( \begin{array}{cc} P^{-1/2} & P^{-1/2} \\
-i & i \end{array} \right) .     \label{c7}
\end{gather}
Therefore 
\begin{equation} \label{c8}
R ( z ) = U^{-1} ( L - z )^{-1} U .
\end{equation}
Note that $U : \dot{H}^1_P \oplus L^2 \rightarrow L^2 \oplus L^2$ is a unitary transform and that $L$ is selfadjoint on $L^2\oplus L^2$ with domain $D (L) = H^1 \oplus H^1$. Using \eqref{c6}--\eqref{c8}, we compute
\begin{equation*}
\left(\begin{array}{cc} 0 & 0 \\ \widetilde{\partial}_j & 0 \end{array} \right) R ( z )
=\frac{1}{2}\left(\begin{array}{cc} 0 & 0\\ A & B\end{array} \right) ,
\end{equation*} 
with
\begin{gather*}
A = \widetilde{\partial}_j ( P^{1 / 2} - z )^{-1} + \widetilde{\partial}_j ( - P^{1 / 2} - z )^{-1} ,  \\ 
B = i \big( \widetilde{\partial}_j ( P^{1 / 2} - z )^{-1} - \widetilde{\partial}_j ( - P^{1/2} - z )^{-1} \big) P^{- 1 / 2} .
\end{gather*}
In order to prove a bound on $\CL ( \CH^0 , \CH^1 )$, it is therefore sufficient to show that 
\begin{gather*}
\< x \>^{- 1 / 2 - \varepsilon} \widetilde{\partial}_j ( P^{1 / 2} - z )^{-1} \< x \>^{- 1 / 2 - \varepsilon} : H^{1} \longrightarrow H^{1} ,   \\
\< x \>^{- 1 / 2 - \varepsilon}\widetilde{\partial}_j P^{-1/2}(P^{1/2}-z)^{-1}\<x\>^{-1/2-\varepsilon} : L^{2} \longrightarrow H^{1} ,
\end{gather*}
are bounded uniformly in $z \in \C \setminus \R$, $\vert z \vert \leq C$. This follows from \cite[Lemmas 4.1, 4.7 and 4.8]{BoHa10_01} and \cite[Theorem 1]{BoHa10_02}. In order to prove the estimates for $s \in \Z$, we commute with the partial derivatives $\widetilde{\partial}_k$ and use \cite[Lemmas 4.1, 4.7 and 4.8]{BoHa10_01}. Eventually, the case $s \in \R$ follows from an interpolation argument.
\end{proof}

To prove Theorem \ref{b16}, it will be useful to have an explicit form of the powers of the perturbed resolvent $R^{k} ( z )$ in terms of the powers of the free resolvent $R_{0}^{j} ( z )$, $1 \leq j \leq k$, and of the perturbed resolvent $R ( z )$.

\begin{lemma}\sl \label{c1}
For all $k \in \N^{*}$ and $z \in \C \setminus \R$, we have
\begin{equation*}
R^k(z) = \sum_{\text{\rm{finite}}} M_0 V \cdots V M_{n} ,
\end{equation*}
with $M_0 = R_0^{\alpha_0} ( z )$, $M_n = R_0^{\alpha_n} ( z )$ and $M_j = R ( z )$ (in which case we put $\alpha_j = 1$) or $M_j = R_0^{\alpha_j} ( z )$ for $1 \leq j \leq n-1$. Moreover, the $\alpha_{j}$'s satisfy
\begin{equation*}
\forall j \in \{ 0 , \ldots , n \} \qquad 0 < \alpha_j \leq k , \quad \alpha_j + \alpha_{j+1} \leq k + 1 \qquad \text{and} \qquad \sum_{j=0}^{n} \alpha_j = n + k .
\end{equation*}
\end{lemma}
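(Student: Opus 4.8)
The plan is to prove the identity by induction on $k$, expanding one extra factor of the free resolvent at each step via the resolvent identity
\[
R ( z ) = R_0 ( z ) + R_0 ( z ) V R ( z ) = R_0 ( z ) + R ( z ) V R_0 ( z ) ,
\]
which follows from $V = G_0 - G$. For the base case $k = 1$ one writes $R ( z ) = R_0 ( z ) + R_0 ( z ) V R ( z )$, and then applies the resolvent identity once more to the trailing $R ( z )$, so that $R ( z ) = R_0 ( z ) + R_0 ( z ) V R_0 ( z ) + R_0 ( z ) V R ( z ) V R_0 ( z )$. Each of these three terms has the required form $M_0 V \cdots V M_n$ with $M_0 , M_n$ powers of $R_0$, the middle factors (if any) either powers of $R_0$ or a single $R ( z )$, and with all $\alpha_j = 1$; the constraints $0 < \alpha_j \le 1 = k$, $\alpha_j + \alpha_{j+1} \le 2 = k + 1$, and $\sum \alpha_j = n + 1 = n + k$ are then immediate.

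For the inductive step, assume the representation holds for $R^k ( z )$ and write $R^{k+1} ( z ) = R^k ( z ) R ( z )$, substituting the inductive formula for $R^k ( z )$ and, for the appended factor $R ( z )$, using $R ( z ) = R_0 ( z ) + R ( z ) V R_0 ( z )$. A typical summand becomes
\[
M_0 V \cdots V M_{n-1} V R_0^{\alpha_n} ( z ) R_0 ( z ) + M_0 V \cdots V M_{n-1} V R_0^{\alpha_n} ( z ) R ( z ) V R_0 ( z ) .
\]
In the first term the last two factors merge into $R_0^{\alpha_n + 1} ( z )$, so $n$ is unchanged, the final exponent goes from $\alpha_n$ to $\alpha_n + 1$, and the total increases by one; in the second term we append two new factors $R ( z )$ and $R_0 ( z )$, so $n$ becomes $n + 2$, two new exponents equal to $1$ are added, and again the total increases by one. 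One must then check that the three bookkeeping conditions are preserved. The sum condition is automatic: $\sum_{j=0}^{n} \alpha_j$ passes from $n + k$ to $n + (k+1)$ in the merging case and $(n+2) + (k+1)$ in the appending case. For the pointwise bound $\alpha_j \le k+1$: only the last exponent changes in the merging case, from $\alpha_n \le k$ to $\alpha_n + 1 \le k + 1$, which is fine; the appended exponents are $1$. For the adjacency condition $\alpha_j + \alpha_{j+1} \le k + 2$: the only new adjacent pair at risk is $(\alpha_{n-1}, \alpha_n + 1)$ in the merging case, where the inductive hypothesis gave $\alpha_{n-1} + \alpha_n \le k + 1$, hence $\alpha_{n-1} + (\alpha_n + 1) \le k + 2$; all other pairs involve a new exponent equal to $1$ next to something $\le k + 1$.

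The main obstacle — really the only subtle point — is making sure every summand that arises keeps $M_0$ and $M_n$ genuine \emph{powers} of $R_0$ (not an $R ( z )$) and keeps all the interior non-$R_0$ factors as single copies of $R ( z )$ rather than higher powers; this is why one does not naively substitute $R ( z ) = R_0 ( z ) + R_0 ( z ) V R ( z )$ for a trailing $R ( z )$ (that would leave an $R ( z )$ at the right end), but instead uses the other form $R ( z ) = R_0 ( z ) + R ( z ) V R_0 ( z )$ so that the rightmost factor is always $R_0 ( z )$, and symmetrically at the left end. Once the substitution rules are chosen so that the ends stay free and interior $R ( z )$'s never get multiplied together, the exponent inequalities are preserved exactly as above, and the induction closes. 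Writing out the finitely many term types that occur and tracking $(n, \alpha_0, \dots, \alpha_n)$ through one step is routine and completes the proof.
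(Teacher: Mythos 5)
Your overall strategy (induction on $k$ via the resolvent identity) is the same as the paper's, and your base case and exponent bookkeeping are fine, but the inductive step as written has a genuine structural gap. When you expand the appended factor as $R ( z ) = R_0 ( z ) + R ( z ) V R_0 ( z )$, the second resulting summand is
\begin{equation*}
M_0 V \cdots V M_{n-1} V R_0^{\alpha_n} ( z ) \, R ( z ) \, V R_0 ( z ) ,
\end{equation*}
in which $R_0^{\alpha_n} ( z )$ and $R ( z )$ sit side by side with \emph{no} $V$ between them. This is not of the form $M_0 V \cdots V M_{n+2}$: the lemma requires strict alternation, with a copy of $V$ separating every pair of consecutive $M_j$'s. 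Your bookkeeping (``$n$ becomes $n+2$, two new exponents equal to $1$ are added'') silently inserts the missing $V$. The alternation is not cosmetic: in the proof of Theorem \ref{b16} the weights $\langle x \rangle^{\alpha_j + \varepsilon}$ and $\langle x \rangle^{\alpha_{j+1} + \varepsilon}$ placed around adjacent factors are absorbed precisely by the decay of the intervening $V$ (using $\alpha_j + \alpha_{j+1} \leq k+1 < \rho$), so two resolvents glued together without a $V$ would make that scheme collapse.

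The repair is small and is already contained in your base case: expand the appended factor with the three-term identity $R ( z ) = R_0 ( z ) + R_0 ( z ) V R_0 ( z ) + R_0 ( z ) V R ( z ) V R_0 ( z )$ rather than the two-term one. Then the leftmost factor of whatever replaces $R ( z )$ is always $R_0 ( z )$, which merges with $M_n = R_0^{\alpha_n} ( z )$ into $R_0^{\alpha_n + 1} ( z )$; the interior $R ( z )$ stays flanked by $V$'s, the right end remains a power of $R_0$, and your exponent checks ($\alpha_j \leq k+1$, $\alpha_j + \alpha_{j+1} \leq k+2$, $\sum_j \alpha_j = n' + k + 1$) go through verbatim. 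This is exactly what the paper does, except that it multiplies the three-term expansion of $R ( z )$ on the left ($R^{k+1} = R \, R^k$) instead of on the right; that difference is immaterial.
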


\begin{proof}
We prove the lemma by induction over $k$. In the case $k=1$, we use twice the resolvent identity:
\begin{align*}
R (z) &= R_0(z) + R_0 (z) V R (z)  \\
&= R_0 (z) + R_0 (z) V R_0 (z) + R_0 (z) V R(z) V R_0 (z) .
\end{align*}
Let us now suppose the lemma for $k \geq 1$. We write
\begin{align*}
R^{k + 1} (z) &= R (z) R^k (z)  \\
&= \big( R_0 (z) + R_0 (z) V R_0 (z) + R_0 (z) V R (z) V R_0 (z) \big) \sum_{\text{\rm{finite}}} M_0 V \cdots V M_n   \\
&= \sum_{\text{\rm{finite}}} M_0 V \cdots V M_{m} ,
\end{align*}
where the last sum has the required properties.
\end{proof}

\begin{proof}[Proof of Theorem \ref{b16}]
Let us first consider the case $k=1$. From \eqref{c9}, we have
\begin{equation} \label{c10}
R ( z ) = ( P - z^2 )^{-1} \left( \begin{array}{cc} z & i \\
-i P & z \end{array} \right) .
\end{equation}
Using \cite[Theorem 1]{BoHa10_02} and a simple calculation, we get
\begin{equation*}
\big\Vert \< x \>^{- 1 - \varepsilon} ( P - z^2 )^{-1} \< x \>^{- 1 - \varepsilon} \big\Vert_{\CL ( H^s , H^{s + 2} )} \lesssim 1 ,
\end{equation*}
uniformly in $z \in \C \setminus \R$, $\vert z \vert\leq C$. It then follows by \eqref{c10} that 
\begin{equation} \label{c13}
\big\Vert \< x \>^{- 1 - \varepsilon} R ( z ) \< x \>^{- 1 - \varepsilon} \big\Vert_{\CL ( \CH^s , \CH^{s + 1 } )} \lesssim 1 ,
\end{equation}
and the case $k = 1$ follows.

We now treat the case $k \geq 2$, $d \geq 3$ odd. Using Lemma \ref{c1}, we can write
\begin{align}
\< x \>^{- k - \varepsilon} R^{k} (z) \< x \>^{- k - \varepsilon} &= \sum_{\text{\rm{finite}}} \< x \>^{- k - \varepsilon} M_0 V \cdots M_{j} V M_{j + 1} \cdots V M_{n} \< x \>^{- k - \varepsilon}   \nonumber  \\
&= \sum_{\text{\rm{finite}}} \< x \>^{\alpha_{0} - k} \< x \>^{- \alpha_{0} - \varepsilon} M_0 \< x \>^{- \alpha_{0} - \varepsilon} \< x \>^{\alpha_{0} + \varepsilon} V \cdots    \nonumber   \\
&\qquad \cdots M_{j} \< x \>^{- \alpha_{j} - \varepsilon} \< x \>^{\alpha_{j} + \varepsilon} V \< x \>^{\alpha_{j + 1} + \varepsilon}  \< x \>^{- \alpha_{j + 1} - \varepsilon} M_{j + 1} \cdots   \nonumber   \\
&\qquad \cdots V \< x \>^{\alpha_{n} + \varepsilon} \< x \>^{- \alpha_{n} - \varepsilon} M_{n} \< x \>^{- \alpha_{n} - \varepsilon} \< x \>^{\alpha_{n} - k} .  \label{c12}
\end{align}
Since $\alpha_{j} + \alpha_{j + 1} \leq k + 1 < \rho$, \eqref{c11} and \eqref{c14} imply that
\begin{equation*}
\< x \>^{\alpha_{j} + \varepsilon} V \< x \>^{\alpha_{j + 1} + \varepsilon} : {\mathcal H}^s \longrightarrow {\mathcal H}^{s - 1} ,
\end{equation*}
is a bounded operator. Moreover, from Proposition \ref{b1} and \eqref{c13}, we have
\begin{equation*}
\big\Vert \< x \>^{- \alpha_{j} - \varepsilon} M_{j} \< x \>^{- \alpha_{j} - \varepsilon} \big\Vert_{\CL ( \CH^s , \CH^{s + \alpha_{j} } )} \lesssim 1 ,
\end{equation*}
uniformly in $z \in \C \setminus \R$, $\vert z \vert \leq C$. Combining \eqref{c12} with the previous estimates, $\alpha_{0} \leq k$, $\alpha_{n} \leq k$ and $\sum \alpha_{j} = k + n$, we get that $\< x \>^{- k - \varepsilon} R^{k} (z) \< x \>^{- k - \varepsilon}$ is bounded uniformly in $z \in \C \setminus \R$, $\vert z \vert \leq C$ as operator from $\CH^{s}$ to $\CH^{s + k}$.

It remains to study the case $k \geq 2$ and $d = 3$. As before, Lemma \ref{c1} gives
\begin{align}
\< x \>^{- k + 1 / 2 - \varepsilon} R^{k} (z & ) \< x \>^{- k + 1 / 2 - \varepsilon}  \nonumber  \\
&= \sum_{\text{\rm{finite}}} \< x \>^{- k + 1 / 2 - \varepsilon} M_0 V \cdots M_{j} V M_{j + 1} \cdots V M_{n} \< x \>^{- k + 1 / 2 - \varepsilon} . \label{c15}
\end{align}
From \eqref{c11} and \eqref{c14}, we have
\begin{align*}
V ={}& \left( \begin{array}{cc} 0 & 0 \\ \widetilde{\partial}^{*} & 0 \end{array} \right) \left( \begin{array}{cc} 0 & i r_{0} \\ 0 & 0 \end{array} \right) \left( \begin{array}{cc} 0 & 0 \\ \widetilde{\partial} & 0 \end{array} \right) + \left( \begin{array}{cc} 0 & 0 \\ \widetilde{\partial}^{*} & 0 \end{array} \right) \left( \begin{array}{cc} i r_{1} & 0 \\ 0 & 0 \end{array} \right)   \\
&+ \left( \begin{array}{cc} 0 & 0 \\ 0 & i r_{1} \end{array} \right) \left( \begin{array}{cc} 0 & 0 \\ \widetilde{\partial} & 0 \end{array} \right) + \left( \begin{array}{cc} 0 & 0 \\ i r_{2} & 0 \end{array} \right) .
\end{align*}
where the sum over the indexes does not appear. In particular, since $\alpha_{j} + \alpha_{j + 1} \leq k + 1 < \rho + 1$, $V$ can be written as
\begin{equation} \label{c16}
V = \sum_{\text{\rm{finite}}} A_{j}^{*} B A_{j + 1} ,
\end{equation}
where
\begin{gather*}
A_{j} = \< x \>^{- \alpha_{j} + 1 / 2 - \varepsilon} \left( \begin{array}{cc} 0 & 0 \\ \widetilde{\partial} & 0 \end{array} \right) \qquad \text{or} \qquad A_{j} = \< x \>^{- \alpha_{j} - \varepsilon} ,    \\
A_{j}^{*} = \left( \begin{array}{cc} 0 & 0 \\ \widetilde{\partial}^{*} & 0 \end{array} \right) \< x \>^{- \alpha_{j} + 1 / 2 - \varepsilon} \qquad \text{or} \qquad A_{j}^{*} = \< x \>^{- \alpha_{j} - \varepsilon} ,
\end{gather*}
and $B$ is a bounded operator $\CH^{s}$ to $\CH^{s - 1}$. From Proposition \ref{b13}, Lemma \ref{c2} and \eqref{c13}, we have, for $j \in \{ 1 , \ldots n-1 \}$,
\begin{equation} \label{c17}
\big\Vert A_{j} M_{j} A_{j}^{*} \big\Vert_{\CL ( \CH^{s} , \CH^{s + \alpha_{j}} )} \lesssim 1 ,
\end{equation}
uniformly in $z \in \C \setminus \R$, $\vert z \vert \leq C$. Moreover, since $k \geq 2$, we have $k - 1/2 \geq \max ( 1 , \alpha_{0} - 1 / 2 )$ and $k - 1/2 \geq \max ( 1 , \alpha_{n} - 1 / 2 )$. Then, Proposition \ref{b13} and Lemma \ref{c2} give
\begin{equation} \label{c18}
\big\Vert \< x \>^{- k + 1 / 2 - \varepsilon} M_{0} A_{0}^{*} \big\Vert_{\CL ( \CH^{s} , \CH^{s + \alpha_{0}} )} \lesssim 1 \quad \text{and} \quad \big\Vert A_{n} M_{n} \< x \>^{- k + 1 / 2 - \varepsilon} \big\Vert_{\CL ( \CH^{s} , \CH^{s + \alpha_{n}} )} \lesssim 1 .
\end{equation}
Putting together \eqref{c15}, \eqref{c16}, \eqref{c17} and \eqref{c18}, we get that $\< x \>^{- k + 1 / 2 - \varepsilon} R^{k} (z) \< x \>^{- k + 1 / 2 - \varepsilon}$ is bounded uniformly in $z \in \C \setminus \R$, $\vert z \vert \leq C$ as operator from $\CH^{s}$ to $\CH^{s + k}$.
\end{proof}

It turns out that the weighted resolvent of $G$ is not only bounded, but also has some H\"older regularity which will be used in the proof of Theorem \ref{b17}.

\begin{proposition}\sl \label{c19}
Assume $d \geq 3$ odd, $k \in \N$ with $k \geq 2$, $\alpha \in ] 0 , 1 [$ and $\rho > k + \alpha + 1$ ($\rho > k + \alpha$ for $d = 3$). Let $\kappa=k$ ($\kappa = k - 1 / 2$ for $d=3$). Then, for all $s \in \R$ and $C , \varepsilon>0$, we have
\begin{equation*}
\big\Vert \< x \>^{- \kappa - \alpha - \varepsilon} \big( R^k ( z ) - R^k ( z^{\prime} ) \big) \< x \>^{- \kappa - \alpha - \varepsilon} \big\Vert_{\CL ( \CH^s, \CH^{s + k} )} \lesssim \vert z - z^{\prime} \vert^{\alpha} ,
\end{equation*}
uniformly in $\vert z \vert , \vert z^{\prime} \vert \leq C$ with $\im z \cdot \im z^{\prime} > 0$.
\end{proposition}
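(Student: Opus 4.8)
The plan is to mimic exactly the proof of Theorem \ref{b16}, replacing the uniform bound at each step by a H\"older estimate in $z$. The starting point is the case $k=1$ analogue: from the resolvent expression \eqref{c10} and \cite[Theorem 1]{BoHa10_02}, together with its H\"older counterpart, one has
\begin{equation*}
\big\Vert \< x \>^{- 1 - \alpha - \varepsilon} \big( R(z) - R(z') \big) \< x \>^{- 1 - \alpha - \varepsilon} \big\Vert_{\CL(\CH^s, \CH^{s+1})} \lesssim \vert z - z' \vert^{\alpha} ,
\end{equation*}
and similarly the free resolvents $R_0^j(z)$ satisfy a H\"older bound of the same type, which can be obtained from Proposition \ref{b1} or \ref{b13} by interpolating the uniform bound at weight $\kappa+\varepsilon$ with the derivative bound (formally $\partial_z R_0^j = j R_0^{j+1}$, which costs one extra power of $\< x \>$ on each side). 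Concretely, for $0<\alpha<1$ and $j \le k$ one gets, for $d\ge 3$ odd with $\kappa$ as in the statement,
\begin{equation*}
\big\Vert \< x \>^{- \alpha_j - \alpha - \varepsilon} \big( R_0^{\alpha_j}(z) - R_0^{\alpha_j}(z') \big) \< x \>^{- \alpha_j - \alpha - \varepsilon} \big\Vert_{\CL(\CH^s, \CH^{s+\alpha_j})} \lesssim \vert z - z' \vert^{\alpha} ,
\end{equation*}
and likewise for the $d=3$ weights with the $+1/2$ gain from Lemma \ref{c2}.

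Next, I would plug Lemma \ref{c1} into the telescoping identity: writing $R^k(z) = \sum M_0 V \cdots V M_n$ and $R^k(z') = \sum M_0' V \cdots V M_n'$ with the same combinatorial structure, the difference
\begin{equation*}
R^k(z) - R^k(z') = \sum_{\text{finite}} \sum_{\ell=0}^{n} M_0' V \cdots M_{\ell-1}' V \big( M_\ell(z) - M_\ell(z') \big) V M_{\ell+1}(z) \cdots V M_n(z)
\end{equation*}
decomposes each summand into $n+1$ pieces, in each of which exactly one factor $M_\ell$ is replaced by a difference $M_\ell(z) - M_\ell(z')$. Then I would insert the weights $\< x \>^{\pm(\alpha_j + \varepsilon)}$ around each $M_j$ and weights $\< x \>^{\pm(\alpha_j+\alpha+\varepsilon)}$ around the one distinguished factor, exactly as in \eqref{c12} (resp.\ \eqref{c15}--\eqref{c18} for $d=3$). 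The factors $M_j$ with $j \neq \ell$ are controlled by the uniform bounds of Theorem \ref{b16}, Proposition \ref{b1}, Proposition \ref{b13}, Lemma \ref{c2} and \eqref{c13}; the distinguished factor $M_\ell(z) - M_\ell(z')$ is controlled by the H\"older bounds above; and the potential factors $\< x \>^{\alpha_j + \alpha + \varepsilon} V \< x \>^{\alpha_{j+1} + \alpha + \varepsilon}$ (or the split form \eqref{c16} in dimension $3$) are bounded $\CH^s \to \CH^{s-1}$ precisely because now $\alpha_j + \alpha_{j+1} + 2\alpha \le k + 1 + 2\alpha < \rho + 1 + \alpha$, which is where the hypothesis $\rho > k + \alpha + 1$ (resp.\ $\rho > k+\alpha$ for $d=3$) enters. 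Accounting for $\alpha_0 \le k$, $\alpha_n \le k$ and $\sum \alpha_j = n+k$ exactly as before, the leftmost and rightmost weight powers $\< x \>^{\alpha_0 - k}$, $\< x \>^{\alpha_n - k}$ are bounded, and each summand is $\lesssim \vert z - z'\vert^\alpha$.

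The main obstacle is the H\"older estimate for the free resolvent powers at low frequency: one must check that interpolating the two uniform bounds $\Vert \< x \>^{-\alpha_j-\varepsilon} R_0^{\alpha_j}(z) \< x \>^{-\alpha_j-\varepsilon}\Vert \lesssim 1$ and $\Vert \< x \>^{-\alpha_j-1-\varepsilon} R_0^{\alpha_j+1}(z) \< x \>^{-\alpha_j-1-\varepsilon}\Vert \lesssim 1$ (the latter still valid since $\alpha_j + 1 \le k+1$ and in dimension $3$ covered by Proposition \ref{b13}, noting $\kappa$ is chosen so that $\alpha_j + 1 \le \kappa + 1$ for the relevant range) genuinely yields $\vert z - z'\vert^\alpha$ regularity up to the real axis — this is a standard fact (a function holomorphic off $\R$ with a bound on its derivative growing like one inverse power of $\dist(z,\R)$ is $\alpha$-H\"older for every $\alpha<1$), but it has to be applied on the operator level with the weights, using that $R_0^{\alpha_j+1}(z) = \frac{1}{\alpha_j}\partial_z R_0^{\alpha_j}(z)$ and that multiplication by $\< x \>$ commutes with $G_0$ up to bounded operators. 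Once this is in place the rest is bookkeeping identical to the proof of Theorem \ref{b16}. The $d=3$ case requires, in addition, the H\"older version of Lemma \ref{c2}, which follows from \cite[Lemmas 4.1, 4.7, 4.8]{BoHa10_01} and \cite[Theorem 1]{BoHa10_02} by the same interpolation and commutation argument.
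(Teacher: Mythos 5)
Your proposal follows essentially the same route as the paper: H\"older bounds for the powers of the free resolvent and for $R$ (and, in $d=3$, for the derivative-weighted variants of Lemma \ref{c2}) obtained by interpolating consecutive uniform bounds via $R_0^{j+1}=j^{-1}\partial_z R_0^{j}$, then the telescoping of Lemma \ref{c1} with an extra $\< x \>^{-\alpha}$ placed around the single difference factor $M_\ell(z)-M_\ell(z')$. The only slip is in the weight count on $V$: since just the distinguished factor carries the extra $\alpha$, each of the (at most two) adjacent $V$'s must absorb $\alpha_j+\alpha_{j+1}+\alpha\le k+1+\alpha<\rho$ (resp. $<\rho+1$ for the split form \eqref{c16} in $d=3$), not the $2\alpha$ on both sides that you wrote, under which the hypothesis $\rho>k+\alpha+1$ would not close the argument for $d\ge 5$.
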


\begin{corollary}\sl \label{c26}
Proposition \ref{c19} and a classical argument imply that the powers of the weighted resolvent have a limit on the real axis. More precisely, under the assumptions of Proposition \ref{c19}, the limits
\begin{equation*}
\< x \>^{- \kappa - \varepsilon} R^{j} ( \lambda \pm i 0 ) \< x \>^{- \kappa - \varepsilon} = \lim_{\delta \downarrow 0} \< x \>^{- \kappa - \varepsilon} R^{j} ( \lambda \pm i \delta ) \< x \>^{- \kappa - \varepsilon} ,
\end{equation*}
exist for $\lambda \in ] - C , C [$ and $j \in \{ 1 , \ldots , k \}$. Moreover, for $j \in \{ 1 , \ldots , k -1 \}$,
\begin{equation*}
\< x \>^{- \kappa - \varepsilon} R^{j + 1} ( \lambda \pm i 0 ) \< x \>^{- \kappa - \varepsilon} = j^{- 1} \partial_{\lambda} \< x \>^{- \kappa - \varepsilon} R^{j} ( \lambda \pm i 0) \< x \>^{- \kappa - \varepsilon} ,
\end{equation*}
and
\begin{equation*}
\big\Vert \< x \>^{- \kappa - \alpha - \varepsilon} \big( R^k ( \lambda \pm i 0 ) - R^k ( \lambda^{\prime} \pm i 0 ) \big) \< x \>^{- \kappa - \alpha - \varepsilon} \big\Vert_{\CL ( \CH^s, \CH^{s + k} )} \lesssim \vert \lambda - \lambda^{\prime} \vert^{\alpha} ,
\end{equation*}
uniformly in $\lambda , \lambda^{\prime} \in ] - C , C [$.
\end{corollary}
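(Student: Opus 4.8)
The plan is to derive everything from Proposition \ref{c19}, the uniform boundedness of all lower powers supplied by Theorem \ref{b16}, and the elementary resolvent relation $\partial_z R(z) = R(z)^2$, whence $\partial_z R^j(z) = j R^{j+1}(z)$, by a soft downward induction on $j$ from $k$ to $1$. Throughout I abbreviate $F_j(z) = \< x \>^{-\kappa-\varepsilon} R^j(z) \< x \>^{-\kappa-\varepsilon}$, regarded as an analytic $\CL(\CH^s,\CH^{s+j})$-valued function of $z \in \C\setminus\R$. Since the weights do not depend on $z$, the above relation gives $\partial_z F_j = j F_{j+1}$ in operator norm, and with $z = \lambda + i\delta$ one has $\partial_\lambda = \partial_z$. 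Applying Theorem \ref{b16} to each power $R^j$ with the heavier weight $\kappa \ge j$ (resp. $\kappa \ge j - 1/2$ in $d=3$) and sandwiching with the bounded multiplications $\< x \>^{-(\kappa-j)}$ shows that every $F_j$, $1\le j\le k$, is uniformly bounded on the punctured half-disks $\{0 < \pm\im z,\ |z|\le C\}$.

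For the base of the induction, choosing a small Hölder exponent in Proposition \ref{c19} (so that its weight $\kappa+\alpha+\varepsilon$ is at most $\kappa+\varepsilon$, the excess being absorbed by sandwiching with bounded multiplications) yields a uniform Hölder estimate for $F_k$ itself on each half-disk. The Cauchy criterion then forces $F_k(\lambda\pm i0) = \lim_{\delta\downarrow0} F_k(\lambda\pm i\delta)$ to exist for every $\lambda\in\,]-C,C[$, the convergence being uniform in $\lambda$, and $\lambda\mapsto F_k(\lambda\pm i0)$ to be continuous.

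Now assume, as induction hypothesis at level $j+1$ (with $k-1\ge j\ge 1$), that $F_{j+1}(\lambda\pm i0)$ exists for all $\lambda$, that $F_{j+1}(\cdot+i\delta)$ converges to it uniformly on compact subintervals, and that the boundary value is continuous. Integrating $\partial_z F_j = j F_{j+1}$ along the vertical segment from $\lambda+i\delta'$ to $\lambda+i\delta$ and using $\|F_{j+1}\|\lesssim 1$ gives $\|F_j(\lambda+i\delta) - F_j(\lambda+i\delta')\|\lesssim|\delta-\delta'|$, so $F_j(\lambda+i\delta)$ is Cauchy; hence $F_j(\lambda\pm i0)$ exists, and the same bound shows the convergence is uniform in $\lambda$ at a Lipschitz rate in $\delta$. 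Integrating instead along the horizontal segment at height $\delta$ yields $F_j(\lambda+i\delta) - F_j(\lambda'+i\delta) = j\int_{\lambda'}^{\lambda} F_{j+1}(\mu+i\delta)\,d\mu$; letting $\delta\downarrow0$ and using the uniform convergence of $F_{j+1}$ under the operator-norm integral gives $F_j(\lambda\pm i0) - F_j(\lambda'\pm i0) = j\int_{\lambda'}^{\lambda} F_{j+1}(\mu\pm i0)\,d\mu$. As $\mu\mapsto F_{j+1}(\mu\pm i0)$ is continuous, the right-hand side is $C^1$ in $\lambda$ with derivative $jF_{j+1}(\lambda\pm i0)$; this yields both the continuity of $F_j(\cdot\pm i0)$ (closing the induction) and the identity $F_{j+1}(\lambda\pm i0) = j^{-1}\partial_\lambda F_j(\lambda\pm i0)$. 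Running the induction down to $j=1$ establishes the existence of the limits for all $j\in\{1,\ldots,k\}$ and the derivative relations for $j\in\{1,\ldots,k-1\}$.

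Finally, the boundary Hölder estimate is obtained by passing to the limit directly in Proposition \ref{c19}: with $z=\lambda+i\delta$ and $z'=\lambda'+i\delta$ at the same height $\delta>0$, that proposition gives $\|\< x \>^{-\kappa-\alpha-\varepsilon}(R^k(\lambda+i\delta)-R^k(\lambda'+i\delta))\< x \>^{-\kappa-\alpha-\varepsilon}\|\lesssim|\lambda-\lambda'|^\alpha$ uniformly in $\delta$, and letting $\delta\downarrow0$ (the boundary limits with this weight existing by the same Cauchy argument) produces the claim, and likewise for $-i0$. The only genuinely delicate point is the interchange of the boundary limit $\delta\downarrow0$ with $\partial_\lambda$; I avoid differentiating the limit directly and instead integrate the exact interior relation $\partial_\lambda F_j = jF_{j+1}$ and pass to the limit under the integral, which is legitimate thanks to the uniform boundedness from Theorem \ref{b16} and the uniform convergence propagated by the induction.
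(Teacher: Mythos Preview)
The paper does not supply a proof of this corollary; it is stated immediately after Proposition~\ref{c19} with only the remark that it follows from that proposition ``and a classical argument.'' Your proposal is a correct and complete rendering of precisely such a classical argument: downward induction on $j$ from $k$, using Proposition~\ref{c19} (with a small auxiliary H\"older exponent absorbed into $\varepsilon$) to launch the induction at level $k$, Theorem~\ref{b16} to ensure every $F_j$ is uniformly bounded, and the interior identity $\partial_z F_j = j F_{j+1}$ integrated along vertical and horizontal segments to propagate existence, uniform convergence, and the derivative relation down to $j=1$. The final H\"older estimate on the boundary is obtained by letting $\delta\downarrow 0$ in Proposition~\ref{c19} at fixed height, which is legitimate once the limits are known to exist. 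There is nothing to correct or compare; your write-up simply makes explicit what the paper leaves implicit.
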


\begin{proof}[Proof of Proposition \ref{c19}]
An interpolation argument using Proposition \ref{b1} and Proposition \ref{b13} gives, for $j \geq 2$,
\begin{equation} \label{c20}
\Big\Vert \< x \>^{- \kappa - \alpha - \varepsilon} \big( R^j_0 ( z ) - R^j_0 ( z^{\prime} ) \big) \< x \>^{- \kappa - \alpha - \varepsilon} \big\Vert_{\CL ( \CH^s, \CH^{s + j} )} \lesssim \vert z - z^{\prime} \vert^{\alpha} ,
\end{equation}
with
\begin{equation*}
\kappa =
\left\{ \begin{aligned}
&j &&\text{for } d \geq 5 ,   \\
&j - 1/2 &&\text{for } d = 3 .
\end{aligned} \right.
\end{equation*}
Since $k \geq 2$, Theorem \ref{b16} yields
\begin{equation*}
\big\Vert \< x \>^{- 1 - \varepsilon} R ( z ) \< x \>^{- 1 - \varepsilon} \big\Vert_{\CL ( \CH^s, \CH^{s + 1} )} \lesssim 1 , \quad \big\Vert \< x \>^{- 2 - \varepsilon} R^2 ( z ) \< x \>^{- 2 - \varepsilon} \big\Vert_{\CL ( \CH^s, \CH^{s + 2} )} \lesssim 1 ,
\end{equation*}
for all $d \geq 3$ odd. This gives
\begin{equation} \label{c21}
\big\Vert \< x \>^{- 1 - \alpha - \varepsilon} \big( R ( z ) - R( z^{\prime} ) \big) \< x \>^{- 1 - \alpha - \varepsilon} \big\Vert_{\CL ( \CH^s, \CH^{s + 1} )} \lesssim \vert z- z^{\prime} \vert^{\alpha} .
\end{equation}

For the improvement in dimension $d = 3$, we need estimates in the spirit of Lemma \ref{c2}. Since $k \geq 2$, Theorem \ref{b16} yields\begin{equation*}
\big\Vert \< x \>^{- 3 / 2 - \varepsilon} R^2 ( z ) \< x \>^{- 3 / 2 - \varepsilon} \big\Vert_{\CL ( \CH^s, \CH^{s + 2} )} \lesssim 1 ,
\end{equation*}
and then
\begin{equation*}
\bigg\Vert \< x \>^{- 3 / 2 - \varepsilon} \left( \begin{array}{cc} 0 & 0 \\ \widetilde{\partial} & 0 \end{array} \right) R^{2} ( z ) \< x \>^{- 3 / 2 - \varepsilon} \bigg\Vert_{\CL ( \CH^s , \CH^{s + 2} )} \lesssim 1 .
\end{equation*}
Interpolating with Lemma \ref{c2}, we get
\begin{equation} \label{c22}
\bigg\Vert \< x \>^{- 1 / 2 - \alpha - \varepsilon} \left( \begin{array}{cc} 0 & 0 \\ \widetilde{\partial} & 0 \end{array} \right) \big( R ( z ) - R ( z^{\prime} ) \big) \< x \>^{- 1 / 2 - \alpha - \varepsilon} \bigg\Vert_{\CL ( \CH^s , \CH^{s + 1} )} \lesssim \vert z - z^{\prime} \vert^{\alpha} .
\end{equation}
The same way,
\begin{equation} \label{c23}
\bigg\Vert \< x \>^{- 1 / 2 - \alpha - \varepsilon} \big( R ( z ) - R ( z^{\prime} ) \big) \left( \begin{array}{cc} 0 & 0 \\ \widetilde{\partial}^* & 0 \end{array} \right) \< x \>^{- 1 / 2 - \alpha - \varepsilon} \bigg\Vert_{\CL ( \CH^s , \CH^{s + 1} )} \lesssim \vert z - z^{\prime} \vert^{\alpha} .
\end{equation}

By Lemma \ref{c1}, we can write
\begin{equation}
R^k ( z ) - R^k ( z^{\prime} ) = \sum_{\text{\rm{finite}}} \sum_{j = 0}^{n} M_0 ( z )V \cdots V \big( M_j (z) - M_j ( z^{\prime} ) \big) V \cdots M_n ( z^{\prime} ) .
\end{equation}
Now, the rest of the proof is similar to the one of Theorem \ref{b16} and we omit the details. The difference is that we add an additional $\< x \>^{- \alpha}$ on the left and on the right of $( M_j (z) - M_j ( z^{\prime} ) )$ and that we use \eqref{c20}, \eqref{c21}, \eqref{c22} and \eqref{c23} instead of Proposition \ref{b1}, Proposition \ref{b13}, Lemma \ref{c2} and \eqref{c13} to estimate this term.
\end{proof}

\section{Proof of the main theorem} \label{s5}

In this part, we deduce Theorem \ref{b17} from the smoothness of the weighted resolvent obtained in Section \ref{s4}. First note that for $\mu < 2$, this theorem follows from \cite{BoHa10_03}. Indeed, under the assumption $\rho > 0$, it is proved in \cite[Theorem 1 $i)$]{BoHa10_03} that
\begin{equation*}
\Big\Vert \< x \>^{1 - d} e^{- i t G} \chi ( G ) \< x \>^{1 - d} \Big\Vert \lesssim \< t \>^{1 - d + \varepsilon} .
\end{equation*}
On the other hand, \cite[Lemma 4.2]{BoHa10_01} gives $\Vert \< x \>^{- 1 / 2 - \varepsilon} u \Vert \lesssim \Vert P^{1 / 4} u \Vert$, and then
\begin{equation*}
\Big\Vert \< x \>^{- 1 / 2 - \varepsilon} e^{- i t G} \chi ( G ) \< x \>^{- 1 / 2 - \varepsilon} \Big\Vert \lesssim 1 .
\end{equation*}
Interpolating the two previous estimates yields Theorem \ref{b17} for $\mu < 2$.

In the sequel, we assume that $\mu \geq 2$. Thus, we can apply Corollary \ref{c26} with $k = \lfloor \mu \rfloor + 1 \geq 3$. Using Stone's formula and integrating by parts, we get
\begin{align}
\< x \>^{- \mu - 1 - \varepsilon} & e^{- i t G} \chi ( G ) \< x \>^{- \mu - 1 - \varepsilon}     \nonumber \\
={}& \frac{1}{2 \pi i} \int \chi ( \lambda ) e^{- i t \lambda} \< x \>^{- \mu - 1 - \varepsilon} \big( R ( \lambda + i 0 ) - R ( \lambda - i 0 ) \big) \< x \>^{- \mu - 1 - \varepsilon} d \lambda   \nonumber \\
={}& \frac{1}{2 \pi i} \frac{1}{( i t )^{\lfloor \mu \rfloor}} \sum_{\pm} \sum_{j = 1}^{\lfloor \mu \rfloor + 1} \pm C_{\lfloor \mu \rfloor}^{j - 1}
\int \chi_{j} ( \lambda ) e^{- i t \lambda} \< x \>^{- \mu - 1 - \varepsilon} R^{j} ( \lambda \pm i 0 ) \< x \>^{- \mu - 1 - \varepsilon} d \lambda , \label{c24}
\end{align}
with $\chi_{j} = \partial^{\lfloor \mu \rfloor + 1 - j} \chi \in C^{\infty}_{0} ( \R )$. Moreover, mimicking the proof of \cite[Theorem 25]{FrGrSi08_01}, we obtain, for all $1 \leq j \leq \lfloor \mu \rfloor + 1$,
\begin{align}
A : ={}& \int \chi_{j} ( \lambda ) e^{- i t \lambda} \< x \>^{- \mu - 1 - \varepsilon} R^{j} ( \lambda \pm i 0 ) \< x \>^{- \mu - 1 - \varepsilon} d \lambda  \nonumber \\
={}& \int \chi_{j} ( \lambda + \pi / t ) e^{- i t ( \lambda + \pi / t )} \< x \>^{- \mu - 1 - \varepsilon} R^{j} ( \lambda + \pi / t \pm i 0 ) \< x \>^{- \mu - 1 - \varepsilon} d \lambda   \nonumber \\
={}& - \int \chi_{j} ( \lambda + \pi / t ) e^{- i t \lambda} \< x \>^{- \mu - 1 - \varepsilon} R^{j} ( \lambda + \pi / t \pm i 0 ) \< x \>^{- \mu - 1 - \varepsilon} d \lambda    \nonumber  \\
={}& - A + \int \big( \chi_{j} ( \lambda ) - \chi_{j} ( \lambda + \pi / t ) \big) e^{- i t \lambda} \< x \>^{- \mu - 1 - \varepsilon} R^{j} ( \lambda \pm i 0 ) \< x \>^{- \mu - 1 - \varepsilon} d \lambda   \nonumber \\ 
&+ \int \chi_{j} ( \lambda + \pi / t ) e^{- i t \lambda} \< x \>^{- \mu - 1 - \varepsilon} \big( R^{j} ( \lambda \pm i 0 ) - R^{j} ( \lambda + \pi / t \pm i 0 ) \big) \< x \>^{- \mu - 1 - \varepsilon} d \lambda   \nonumber  \\
={}& \CO \big( t^{\lfloor \mu \rfloor - \mu} \big) ,    \label{c25}
\end{align}
since $\lambda \mapsto \< x \>^{- \mu - 1 - \varepsilon} R^{j} ( \lambda \pm i 0 ) \< x \>^{- \mu - 1 - \varepsilon}$ (and of course $\lambda \mapsto \chi_{j} ( \lambda )$) is H\"{o}lder continuous of order $\mu - \lfloor \mu \rfloor$ thanks to Corollary \ref{c26}. Then, \eqref{c24} and \eqref{c25} imply part $i)$ of Theorem \ref{b17} in the case $d \geq 3$ odd. This argument gives also the improvement in dimension $d=3$. In order to prove part $ii)$ of the theorem, it is sufficient to use the high energy estimates of \cite[Theorem 5 $ii)$]{BoHa10_03} as well as the formula \eqref{b5}.

\bibliographystyle{amsplain}
\providecommand{\bysame}{\leavevmode\hbox to3em{\hrulefill}\thinspace}
\providecommand{\MR}{\relax\ifhmode\unskip\space\fi MR }
% \MRhref is called by the amsart/book/proc definition of \MR.
\providecommand{\MRhref}[2]{%
  \href{http://www.ams.org/mathscinet-getitem?mr=#1}{#2}
}
\providecommand{\href}[2]{#2}

\end{document}